\def\DATE{\today}
\newtheorem{theorem}{Theorem}
\newtheorem{definition}[theorem]{Definition}
\newtheorem{fact}[theorem]{Fact}
\newtheorem{example}[theorem]{Example}
\newtheorem{lemma}[theorem]{Lemma}
\newtheorem{proposition}[theorem]{Proposition}
\newtheorem{remark}[theorem]{Remark}
\newtheorem*{examples}{Examples}
\def\@evenfoot{\rule{0pt}{20pt}[\today] \hfill}
\def\@oddfoot{\rule{0pt}{20pt}\hfill [\today]}
\def\jaruska #1|#2|#3{
\put(0.00,0.00){\makebox(0.00,0.00){$\bullet$}}
\put(3.50,-1.00){\makebox(0.00,0.00){$#3$}}
\put(0.00,0.00){\line(0,1){1.00}}
\put(0.00,0.00){\line(0,-1){1.00}}
\put(0.00,00){\line(1,-1){1.00}}
\put(0.00,00){\line(2,-1){2}}
\put(0.00,00){\line(-1,-1){1.00}}
\put(0.00,00){\line(-2,-1){2}}
\put(#1,-1){
\put(0.00,0.00){\makebox(0.00,0.00){$\bullet$}}
\put(0.00,0.00){\line(0,-1){1.00}}
\put(0.00,00){\line(1,-1){1.00}}
\put(0.00,00){\line(2,-1){2}}
\put(0.00,00){\line(-1,-1){1.00}}
\put(0.00,00){\line(-2,-1){2}}
\put(#2,-1){
\put(0.00,0.00){\makebox(0.00,0.00){$\bullet$}}
\put(0.00,0.00){\line(0,-1){1.00}}
\put(0.00,00){\line(1,-1){1.00}}
\put(0.00,00){\line(2,-1){2}}
\put(0.00,00){\line(-1,-1){1.00}}
\put(0.00,00){\line(-2,-1){2}}
}}}
\def\R#1{\boxed{#1}\,}
\def\tri{\hbox{$\bullet\!\bullet\!\bullet$}}
\def\dve{\hbox{$\bullet\bullet$}}
\def\b{\bullet}\def\KK{{\mathcal K}}
\def\hustykrouzek{
\bezier{300}(-2,0)(-2,.5)(-1.73,1)
\bezier{300}(-1.73,1)(-1.48,1.48)(-1,1.73)
\bezier{300}(-1,1.73)(-.5,2)(0,2)
\bezier{300}(2,0)(2,.5)(1.73,1)
\bezier{300}(1.73,1)(1.48,1.48)(1,1.73)
\bezier{300}(1,1.73)(.5,2)(0,2)
\bezier{300}(2,0)(2,-.5)(1.73,-1)
\bezier{300}(1.73,-1)(1.48,-1.48)(1,-1.73)
\bezier{300}(1,-1.73)(.5,-2)(0,-2)
\bezier{300}(-2,0)(-2,-.5)(-1.73,-1)
\bezier{300}(-1.73,-1)(-1.48,-1.48)(-1,-1.73)
\bezier{300}(-1,-1.73)(-.5,-2)(0,-2)
}
\def\exepttree{\hskip .5em\raisebox{-.2em}{\rule{.8pt}{1.1em}}  \hskip .5em}
\def\card{{\rm card}}\def\calC{{\mathcal{C}}}\def\rada#1#2{#1,\ldots,#2}
\def\Rada#1#2#3{#1_{#2},\dots,#1_{#3}}\def\Dbar{{\sf D}}
\def\epi{ \twoheadrightarrow}\def\Sree#1{{\EuScript{S}^n_{#1}}}
\def\Sreep#1{{\EuScript{S}^3_{#1}}}
\def\Free{{\pAss^n_d(V)}}\def\Tree#1{{\EuScript{T}^n_{#1}}}
\def\q{{\hbox{$\bullet$}}}\def\qq{{\hbox{\small$\bullet$}}}
\def\dst{\delta_{\rm st}}\def\Cst#1{{C_{\sstildeAss}^{#1}(A;A)_{\rm st}}}
\def\PCst#1{{C_{\calP}^{#1}(A;A)_{\rm st}}}\def\sspAss{{p\ssAss}}
\def\PC#1{{C_{\calP}^{#1}(A;A)}}\def\sstAss{{t\ssAss}}
\def\C#1{{C^{#1}_{\sstildeAss}(A;A)}}
\def\scrR{{\mathscr R}}\def\pa{{\partial}}\def\Hom{{\it Hom\/}}
\def\H#1{{H_{\sstildeAss}^{#1}(A;A)}}
\def\Hst#1{{H_{\sstildeAss}^{#1}(A;A)_{\rm st}}}
\def\PHst#1{{H_{\calP}^{#1}(A;A)_{\rm st}}}
\def\PH#1{{H_{\calP}^{#1}(A;A)}}
\def\bbbC{{\mathbb C}}\def\frakZ{{\mathfrak Z}}\def\bbbR{{\mathbb R}}
\def\otexp#1#2{{#1}^{\otimes #2}}\def\ot{\otimes}
\def\ssAss{\hbox{\tiny $\mathcal{A}\it ss$}}
\def\Ass{\hbox{$\mathcal{A}\it ss$}}
\def\bfk{{\mathbf k}}\def\tAss{{t\Ass}}
\def\tildeAss{\widetilde{\Ass}}
\def\sstildeAss{\widetilde{\ssAss}}\def\ssttildeAss{t\widetilde{\ssAss}{}}
\def\id{\hbox{$1 \hskip -.3em 1$}}\def\ssptildeAss{p\widetilde{\ssAss}{}}
\def\calP{{\mathcal P}}\def\Span{{\it Span}}\def\pAss{{p\Ass}}
\def\osusp{\hbox{\bf s\hskip .1em}}
\def\ttildeAss{{t\tildeAss}{}}\def\ptildeAss{{p\tildeAss}{}}
\def\ssosusp{\hbox{\scriptsize\bf s\hskip .1em}}
\def\gl#1{
{
\unitlength=.5pt
\begin{picture}(60.00,0.00)(0.00,0.00)
\thicklines
\put(2.00,-32.00){\makebox(0.00,0.00)[b]{\scriptsize $#1$}}
\put(0.00,0.00){\line(-1,-2){17.00}}
\put(0.00,0.00){\line(1,-2){17.00}}
\put(-18,-35.00){\line(1,0){35.00}}
\put(-15,-35.00){\line(0,-1){8.00}}
\put(-11,-35.00){\line(0,-1){8.00}}
\put(15,-35.00){\line(0,-1){8.00}}
\put(3,-37.00){\makebox(0.00,0.00)[t]{\scriptsize $\cdots$}}
\end{picture}}
}
\def\edge{{
\unitlength=1em
\begin{picture}(1.2,1)
\put(0.00,0.35){\line(1,0){1.00}}
\end{picture}
}}
\def\glvetsi#1{
{
\unitlength=.6pt
\begin{picture}(60.00,0.00)(0.00,0.00)
\thicklines
\put(2.00,-32.00){\makebox(0.00,0.00)[b]{\scriptsize $#1$}}
\put(0.00,0.00){\line(-1,-2){17.00}}
\put(0.00,0.00){\line(1,-2){17.00}}
\put(-18,-35.00){\line(1,0){35.00}}
\put(-15,-35.00){\line(0,-1){8.00}}
\put(-11,-35.00){\line(0,-1){8.00}}
\put(15,-35.00){\line(0,-1){8.00}}
\put(3,-37.00){\makebox(0.00,0.00)[t]{\scriptsize $\cdots$}}
\end{picture}}
}
\def\cases#1#2#3#4{
                  \left\{
                         \begin{array}{ll}
                           #1,\ &\mbox{#2}
                           \\
                           #3,\ &\mbox{#4}
                          \end{array}
                   \right.
}
\title[$n$-ary algebras]{(Non-)Koszulness of operads for 
       $\mbox{\large $n$}$-ary algebras, galgalim and other curiosities}
\author[Markl - Remm]{Martin Markl and Elisabeth Remm}
\thanks{The first author was supported by the grant GA \v CR
                  201/08/0397 and by
   the Academy of Sciences of the Czech Republic,
   Institutional Research Plan No.~AV0Z10190503.}
\address{Mathematical Institute of the Academy, {\v Z}itn{\'a} 25,
         115 67 Prague 1, The Czech Republic}
\email{markl@math.cas.cz}
\address{Laboratoire de Math\'ematiques et Applications,
        Universit\'e de Haute Alsace, Facult\'e des Sciences et
        Techniques, 4, rue des Fr\`eres Lumi\`ere,
        68093~Mulhouse~cedex, France.}
\email{Elisabeth.Remm@uha.fr}
\begin{document}

\begin{abstract}
We investigate operads for various $n$-ary algebras.  
As a useful tool we introduce galgalim --
analogs of the Lie-hedra for $n$-ary algebras. We then
focus to algebras with one anti-associative
operation.  We describe the relevant part of the deformation
cohomology for this type of algebras using the minimal model for the
anti-associative operad.  We also discuss free partially associative
algebras and formulate some open problems.
\end{abstract}

\bibliographystyle{plain}

\maketitle

\tableofcontents

\baselineskip17pt plus 1pt minus 1pt
\parskip3pt

\tableofcontents

\section*{Introduction}

We study Koszulness of operads for various $n$-ary algebras,
i.e.~algebras with an $n$-multi\-linear operation satisfying a specific
version of associativity.  In {\em
Section~\ref{Za_chvili_jdu_za_Jaruskou.}\/} we recall basic notions of
quadratic duality and Koszulness for quadratic operads and prove a
couple of related statements, emphasizing specific features of the
non-binary case which do not seem to have been addressed in
literature. Proposition~\ref{Jaruska_mi_udelala_svickovou!!}
describing the Poincar\'e series of generators of the minimal model
is, to our knowledge, a new one.

In {\em Section~\ref{Jaruska_je_moje_pusina.}\/} we introduce four
families of operads -- operads for totally resp.\ partially associative
$n$-algebras, and the operadic suspensions of these operads. In {\em
Section~\ref{Zitra-uz-budu-s-Jaruskou}\/} we define galgalim that,
in some sense, generalize the classical Stasheff's associahedra to the
realms of partially associative $n$-algebras. We will see that galgalim
encode some properties of free partially associative algebras.

In {\em Section~\ref{JarunKa}\/} we formulate and prove results
concerning Koszulness of operads for $n$-ary algebras. They are summed
up in the table of Figure~\ref{0}. 
We will then, in {\em Section \ref{sec:cohom-algebr-over}\/}, 
focus to the particular case of
algebras with one anti-associative operation, i.e.~an operation $a,b
\mapsto ab$ satisfying $a(bc) + a(bc) = 0$ for each $a$, $b$ and $c$.
The corresponding operad $\tildeAss$ is not Koszul, so the deformation
cohomology differs from the ``standard'' one. We describe 
the relevant part of the 
deformation cohomology based on the minimal model of $\tildeAss$. 

In {\em Section~\ref{Posledni_den_v_Mulhouse.}\/} we
give a description of the free partially associative algebras which, in
the Koszul cases, coincides with the one given
in~\cite{gnedbaye:CM96}. {\em Section~\ref{open}\/} formulates
open problems. 

Let us close this introduction by mentioning a couple of references
bearing some relation to the present article, namely 
the work of H.~Ataguema and
A.~Makhlouf~\cite{makhlouf-ataguema}, V.~Dotsenko and
A.~Khoroshkin~\cite{dotsenko-khoroshkin1},
A.V.~Gnedbaye~\cite{gnedbaye:CM96}, E.~Hoffbeck~\cite{hoffbeck} and
the talk given by J.-L.~Loday at the Winter School in Srn\'\i, in
January 2008.

\noindent 
{\bf Conventions.}  The basic reference for operads, quadratic duality and
Koszulness is~\cite{ginzburg-kapranov:DMJ94}, our notation and
terminology will also be based on~\cite{markl:handbook}
and~\cite{markl-shnider-stasheff:book}. We will work with operads in
the category of chain complexes over a field $\bfk$ of characteristic
zero though, in the light of~\cite{fresse:CM04}, most if not all results
remain valid over the ring of integers.

\section{Duality for quadratic operads revisited}
\label{Za_chvili_jdu_za_Jaruskou.}

Most of the ideas recalled in this section are implicitly present
in~\cite{getzler-jones:preprint,ginzburg-kapranov:DMJ94}, 
but we want to emphasize some specific
features of the non-binary case which do not seem to have been
addressed in literature. 

Fix a natural $n \geq 2$ and assume $E = \{E(a)\}_{a \geq 2}$ is a
$\Sigma$-module such that $E(a) = 0$ if $a \not=n$ and that, moreover,
$E(n)$ is finite-dimensional. 
We will study operads $\calP$ of the form
$\calP = \Gamma(E)/(R)$, where $\Gamma(E)$ is the free operad
generated by $E$ and $(R)$ the operadic ideal generated by a subspace
$R \subset \Gamma(E)(2n-1)$.
Operads of this type are called {\em quadratic\/}, or
{\em binary quadratic\/} if $n=2$.\footnote{Let us mention that, in
the original paper \cite{ginzburg-kapranov:DMJ94}, {\em quadratic\/}
always means {\em binary quadratic\/} in the terminology 
of the present note.}    
Let $E^\vee = \{E^\vee(a)\}_{a \geq 2}$ be a $\Sigma$-module with
\[
E^\vee(a) := \cases{\mbox{sgn}_a \otimes \uparrow^{a-2}E(a)^\#}
                   {if $a = n$ and}0{otherwise}
\]
where $\uparrow^{a-2}$ denotes the suspension
iterated $a-2$ times, $\mbox{sgn}_a$ the signum representation of the
symmetric group $\Sigma_a$, and $\#$ the linear dual of a graded
vector space with the induced representation.
Recall that $V^\#:=\Hom(V,\bfk)$, so $(V^\#)_d=(V_{-d})^\#$.
There is a non-degenerate, $\Sigma_{2n-1}$-equivariant pairing
\begin{equation}
\label{1}
\langle - | - \rangle :
\Gamma (E^\vee)(2n-1) \otimes \Gamma (E)(2n-1) \to \mbox {sgn}_{2n-1} 
\end{equation}
determined by requiring that
\[
\langle \uparrow^{n-2} e' \circ_i  \uparrow^{n-2} f'\ |\ 
 e'' \circ_j  f'' \rangle :=
\delta_{ij} 
(-1)^{(i+1)(n+1)} e'(e'') f'(f'') \in \bfk \cong \mbox {sgn}_{2n-1},
\]
for arbitrary $e'  ,f'\in E(n)^\#$, $e''  ,f'' \in E(n)$.

\begin{definition}
\label{v_patek_prijede}
The {\em Koszul\/} or {\em quadratic\/} dual of a quadratic operad
$\calP = \Gamma(E)/(R)$ as above is the quotient
\[
\calP^!  := \Gamma (E^\vee)/(R^\perp),
\] 
where $R^\perp \subset \Gamma
(E^\vee)(2n-1)$ is the annihilator of $R \subset
\Gamma(E)(2n-1)$ in the pairing~(\ref{1}), and $(R^\perp)$ the
operadic ideal generated by $R^\perp$.
\end{definition}

\begin{remark}
{\rm
\label{Last_day_in_Mulhouse}
If $\calP$ is a quadratic operad generated by
an operation of arity $n$ and degree $d$, then the generating operation
of $\calP^!$ has the same arity but
degree $-d + n-2$, i.e.~for $n\not =2$ (the non-binary case) the Koszul
duality {\em may not\/} preserve the degree of the generating
operation. As in the binary case, one easily verifies that
the quadratic dual is a
contravariant reflection, $(\calP^!)^! \cong \calP$.
}
\end{remark}

Recall that the
{\em operadic suspension\/} $\osusp\! E$ of a $\Sigma$-module $E\! =\!
\{E(a)\}_{a \geq 1}$ is the $\Sigma$-module $\osusp\! E = \{\osusp\!
E(a)\}_{a \geq 1}$, where $\osusp\! E(a) := \mbox{sgn}_a \otimes
\hskip -.4em \uparrow^{a-1} E(a)$.  It is a standard fact that, for a
dg-operad $\calP = \{\calP(a)\}_{a \geq 1}$, the operadic suspension
$\osusp\calP = \{\osusp\calP(a)\}_{a \geq 1}$ of the underlying
$\Sigma$-module is has a natural dg-operad structure. The operadic suspension
therefore extends from $\Sigma$-modules to an endofunctor on the category of
dg-operads. Likewise, the operadic suspension $\osusp \calC$ of a
dg-cooperad $\calC$ is a dg-cooperad.  We denote by
$\osusp^{-1}$ the inverse operation and call it, if necessary, the
operadic {\em desuspension\/}. 
In the following proposition, $\calP^\#$ denotes the
componentwise linear dual of a dg-operad with components of finite
type, with the obvious cooperad structure.
 
\begin{proposition}
\label{osud}
The free operad functor commutes with the operadic suspension, 
$\osusp \Gamma = \Gamma \osusp$. For a dg-operad $\calP$ with 
components of finite type, one has a natural isomorphism 
\[
(\osusp\calP)^\# \cong \osusp^{-1}( \calP^\#)
\] 
of dg-cooperads. Finally, if $\calP$ is a quadratic operad as in
Definition~\ref{v_patek_prijede}, its operadic suspension
$\osusp \calP$ is again quadratic and one has a natural isomorphism of
quadratic operads
\begin{equation}
\label{eq:1}
 (\osusp \calP)^! \cong  \osusp^{-1} (\calP^!).
\end{equation}
\end{proposition}

\begin{proof}
The first, rather nontrivial, claim of the proposition is the content
of~\cite[Proposition~II.3.20]{markl-shnider-stasheff:book}. The second
claim is obvious and the third can be verified directly.
\end{proof}

The {\em cobar
construction\/}~\cite[Definition~II.3.9]{markl-shnider-stasheff:book}
of a coaugmented cooperad $\calC$ is a dg-operad $\Omega(\calC)$ of
the form $\Omega(\calC) = (\Gamma(\downarrow\hskip -.5em\overline{ \osusp
\calC}),\pa_\Omega)$.  Here {\bf s} denotes the cooperadic suspension
recalled above, $\overline{\osusp \calC}$ the coaugmentation coideal
of the coaugmented cooperad $\osusp \calC$, and $\downarrow$ the
component-wise desuspension. The differential $\pa_\Omega$ is induced
by the structure operations of the cooperad $\calC$. If $\calP =
\{\calP(a)\}_{a \geq 1}$ is an augmented operad with
finite-dimensional components, the component-wise linear dual
$\calP^\# = \{\calP(a)^\#\}_{a \geq 1}$ is a coaugmented cooperad. 
The composition
$\Dbar(\calP) := \Omega(\calP^\#)$ of the linear dual with the cobar
construction is the {\em dual operad\/}
of~\cite[(3.2.12)]{ginzburg-kapranov:DMJ94}. In section II.3.3 of the
monograph~\cite{markl-shnider-stasheff:book}, $\Dbar(-)$ was called
the {\em dual bar construction\/}. We will use the latter terminology.

For $\calP$ quadratic, there clearly exist a natural map $\Dbar(\calP^!) \to
\calP$ of dg-operads.  The following definition is a straightforward
extension of~\cite[Definition~4.1.3]{ginzburg-kapranov:DMJ94}, allowing
that the quadratic operad $\calP$ need not be binary (i.e.~generated
by operations of arity two).

\begin{definition}
\label{Zitra_budu_s_Jaruskou!}
A quadratic operad $\calP$ is {\em Koszul\/} if the natural map
$\Dbar(\calP) \to \calP^!$ is a homology isomorphism.
\end{definition}

Let us close this section by formulating a couple of properties of
quadratic operads.

\begin{proposition}
\label{Jaruska_sbira_houbicky}
A quadratic operad as in
Definition~\ref{v_patek_prijede} is Kozsul if and only
if its operadic suspension $\osusp \calP$ is Koszul, i.e.~the operadic
suspension preserves Koszulness.
\end{proposition}

\begin{proof}
Assume that $\calP$ is Koszul. This, by definition, means that the
map \hbox{$\rho : \Dbar(\calP) \to \calP^!$} is a homology
isomorphism.  Since the operadic desuspension obviously preserves
homology isomorphisms, the desuspension of $\rho$,
\begin{equation}
\label{Jarina}
\osusp^{-1} \rho : \osusp^{-1} \Dbar(\calP) \to \osusp^{-1} (\calP^!)
\end{equation}
is a homology isomorphism, too. Expanding the definition of the
dual bar construction, one readily sees that the properties of the
operadic (de)suspension stated in Proposition~\ref{osud}
imply that
\[
\osusp^{-1} \Dbar(\calP) \cong  \Dbar( \osusp \calP)
\]
Combining this isomorphisms with~(\ref{Jarina}) and~(\ref{eq:1}), we
obtain a homology isomorphism $\Dbar(\osusp\calP) \to (\osusp
\calP)^!$, which coincides with the canonical map for the quadratic
operad $\osusp \calP$. This shows that $\osusp\calP$ is Koszul. To
prove that the Koszulness of $\osusp P$ implies the Koszulness of
$\calP$, all one needs to do is to reverse the steps of the proof of
the above implication.
\end{proof}

Observe that quadratic operads $\calP$ as we introduced them at the
beginning of this section have the properties that $\calP(1) \cong \bfk$
and that $\calP(a)$ is finite-dimensional for each $a \geq 1$. This
means that they are {\em admissible\/} in the sense 
of~\cite[(3.1.5)]{ginzburg-kapranov:DMJ94}. Therefore, all the
properties of the dual bar construction $\Dbar(-)$ proved in
\cite{ginzburg-kapranov:DMJ94} apply to our case. Namely, the
contravariant functor $\Dbar(-)$
preserves homology isomorphisms
\cite[Theorem~3.2.7b]{ginzburg-kapranov:DMJ94} and the canonical map
$\Dbar\big(\Dbar(\calP)\big) \to \calP$ is a homology isomorphism.
We also have the following extension of 
\cite[Proposition~4.1.4a]{ginzburg-kapranov:DMJ94} to the non-binary case.

\begin{proposition}
\label{sec:dual-quadr-oper}
A quadratic operad $\calP$ is Koszul if and only if so is $\calP^!$,
i.e.~the quadratic duality preserves Koszulness.
\end{proposition}

\begin{proof}
A verbatim transcription of the corresponding statement of
\cite{ginzburg-kapranov:DMJ94}. Suppose that $\calP$ is Koszul and 
let $\rho : \Dbar(\calP) \to \calP^!$
be the canonical map. One then has the composition
\[
\Dbar(\calP^!) \stackrel{\Dbar(\rho)}{\longrightarrow}
\Dbar\big(\Dbar(\calP)\big) \to \calP
\]
which is, due to the properties of the dual bar construction recalled
above, a homology isomorphism. It is immediate that this composition
coincides with the canonical map $\Dbar(\calP) \to \calP^!$ for
$\calP^!$. So the Koszulness of $\calP$ implies the Koszulness of
$\calP^!$. The opposite implication is obtained by applying the above
arguments to $\calP^!$ instead of $\calP$. 
\end{proof}

The {\em Poincar\'e\/} or {\em generating series\/} of a graded operad
$\calP_* = \{\calP_*(a)\}_{a \geq 1}$ with finite-dimensional
components is defined by
\[
g_\calP(t) := \sum_{a \geq 1} \frac 1{a!} \chi(\calP(a)) t^a,
\]
where $\chi(\calP(a))$ denotes the Euler characteristic of the graded
vector space $\calP_*(n)$,
\[
\chi(\calP(a)) := \sum_i (-1)^i \dim(\calP_i(a)).
\]

Recall that each operad $\calP$ with $\calP(1) = \bfk$ admits a {\em
minimal model\/}, unique up to isomorphism
\cite[II.3.10]{markl-shnider-stasheff:book}. This is, by definition, a
homology isomorphism $(\calP,0) \stackrel \rho\leftarrow
(\Gamma(M),\pa)$ from the free operad $\Gamma(M)$ on a collection $M =
\{M(a)\}_{a \geq 2}$, equipped with a differential $\pa$, to the
operad $\calP$ with the trivial differential. The {\em minimality\/}
requires that $\pa(M)$ consists of decomposable elements of the free
operad $\Gamma(M)$.  The following proposition relates the generating
series of $\calP$ and the generating series of the collection of generators
of its minimal model.

\begin{proposition}
\label{Jaruska_mi_udelala_svickovou!!}
Let $\calP$ be an arbitrary operad with $\calP(1) = \bfk$ and
finite-dimensional pieces. Let $(\calP,0) \stackrel \rho\leftarrow
(\Gamma(M),\pa)$ be its minimal model. The Poincar\'e series $g_\calP(t)$
of $\calP$ is related with the generating function
\[
g_M(t) := -t + \sum_{a \geq 2} \frac 1{a!} \chi(M(a))t^a
\] 
of the $\Sigma$-module $M=\{M(a)\}_{a \geq 2}$ by the functional
equation
\begin{equation}
\label{Jaruska_mi_udelala_svickovou!}
g_\calP(-g_M(t)) = t.
\end{equation}
In other words,~$g_M(t)$ is the formal inverse of $g_\calP(t)$ taken with the
opposite sign. 
\end{proposition}

\begin{proof}
The statement can be verified by repeating the steps of the proof
of~\cite[Theorem~3.3.2]{ginzburg-kapranov:DMJ94}. Since our
Proposition \ref{Jaruska_mi_udelala_svickovou!!}  does not seem to be
commonly known, we decided to prove it here, not just refer
to~\cite{ginzburg-kapranov:DMJ94}.

Let us convert first~(\ref{Jaruska_mi_udelala_svickovou!}) into an
equivalent form, more suitable for the purposes of this proof. 
The substitution $t \mapsto g_\calP(t)$
brings~(\ref{Jaruska_mi_udelala_svickovou!}) into
\[
g_\calP(-g_M(g_\calP(t))) = g_\calP(t).
\] 
Applying $g^{-1}_\calP$ to both sides of the above equation leads to
\begin{equation}
\label{eq:2}
-g_M(g_\calP(t)) = t.
\end{equation}
Since $g_M$ is formally invertible, (\ref{eq:2}) is 
equivalent to~(\ref{Jaruska_mi_udelala_svickovou!}).

Recall that the free operad $\Gamma(M)$ is spanned by rooted trees
with vertices decorated by elements of the generating collection $M$,
see \cite[II.1.9]{markl-shnider-stasheff:book} 
for the precise meaning of this statement.
It follows from this observation that, for each $a \geq 2$,
its arity $a$ piece $\Gamma(M)(a)$ decomposes as
\[
\Gamma(M)(a) = \bigoplus_{r \geq 2}\bigoplus_{S_r(a)}
\Gamma(M)(\Rada u1r), 
\]
where
\[
S_r(a) := \big\{(\Rada u1r) \in {\mathbb Z}^r;\
\Rada u1r \geq 1,\ u_1 + \cdots + u_r = a\big\} 
\]
and $\Gamma(M)(\Rada u1r) \subset \Gamma(M)(a)$ is the subspace
spanned by elements of the form
\begin{equation}
\label{eq:3}
(\cdots ((m \circ_r x_r) \circ_{r-1} x_{r-1} ) \cdots) \circ_1 x_1
\end{equation}
with some $m \in M(r) \subset \Gamma(M)(r)$ and 
$x_i \in \Gamma(M)(u_i)$, $1\leq i \leq r$.
In terms of trees, the expressions in~(\ref{eq:3}) can be depicted as
\[
\raisebox{-45pt}{\rule{0pt}{0pt}}
\unitlength=1.2pt
\begin{picture}(60.00,30.00)(0.00,40.00)
\thicklines
\put(17.00,30.00){\makebox(0.00,0.00){$\cdots$}}
\put(47.00,30.00){\makebox(0.00,0.00){$\cdots$}}
\put(30.00,50.00){\makebox(0.00,0.00){$\bullet$}}
\put(34.00,54.00){\makebox(0.00,0.00)[lb]{\scriptsize $m$}}
\put(30.00,50.00){\line(0,-1){20.00}}
\put(30.00,50.00){\line(3,-2){30.00}}
\put(30.00,50.00){\line(-3,-2){30.00}}
\put(30.00,50.00){\line(0,1){20.00}}
\put(15.00,30.00){\makebox(0.00,0.00)[t]{\glvetsi{x_1}}}
\put(45.00,30.00){\makebox(0.00,0.00)[t]{\glvetsi{x_i}}}
\put(75.00,30.00){\makebox(0.00,0.00)[t]{\glvetsi{x_r}}}
\end{picture}\ .
\]
Simple representation theory and combinatorics implies that
\[
\chi\big(\Gamma(M)(\Rada u1r)\big) = \frac{a!}{r!\cdot u_1! \cdots u_r!}\
\chi(M(r)) \cdot
\chi(\Gamma(M)(u_1)) \cdots \chi( \Gamma(M)(u_r))
\]
therefore
\[
\chi\big(\Gamma(M)(a)\big) =  \sum_{r \geq 2}\sum_{S_r(a)} 
\frac{a!}{r!\cdot u_1! \cdots u_r!}\
\chi(M(r)) \cdot
\chi(\Gamma(M)(u_1)) \cdots \chi( \Gamma(M)(u_r)).
\]
Since the minimal model map $\rho$ is a 
homology isomorphism, $\chi(\Gamma(M)(a)) =
\chi(\calP(a))$ for each $a \geq 1$ and the above display implies
\begin{equation}
\label{eq:4}
\chi(\calP(a)) =  \sum_{r \geq 2}\sum_{S_r(a)} 
\frac{a!}{r!\cdot u_1! \cdots u_r!}\
\chi(M(r)) \cdot
\chi(\calP(u_1)) \cdots \chi( \calP(u_r))
\end{equation}
for each $a \geq 2$.
To make the following main argument of the proof more transparent, we denote
\[
\alpha_a := \chi(\calP(a)) \mbox { for $a \geq 1$ and } 
\beta_a := \chi(M(a)) \mbox { for $a \geq 2$}
\]
so that 
\[
g_\calP = \sum_{a \geq 1} \alpha_a t^a
\mbox { and }
g_M =  -t+ \sum_{a \geq 2} \beta_a t^a.
\]
Then~(\ref{eq:4}) reads
\[
\alpha_a =  \sum_{r \geq 2}\sum_{S_r(a)} 
\frac{a!}{r!\cdot u_1! \cdots u_r!}\  \beta_r \cdot
\alpha_{u_1} \cdots \alpha_{u_r},
\]
for $a \geq 2$.
Elementary calculus shows that the above equation is precisely the
recursion that ties the coefficients of the power series 
$g_\calP$ and $g_M$ satisfying~(\ref{eq:2}), hence
also~(\ref{Jaruska_mi_udelala_svickovou!}).  
\end{proof}

The following important criterion of Koszulness, which is a verbatim
generalization of \cite[Theorem~3.3.2]{ginzburg-kapranov:DMJ94},
follows easily from Proposition~\ref{Jaruska_mi_udelala_svickovou!!}.

\begin{theorem}
\label{gk}
If a quadratic operad $\mathcal{P}$ is Koszul, then its Poincar\'e series and
the Poincar\'e series of its dual $\calP^!$ are tied by
the functional equation
\begin{equation}
\label{zitra_Jarka}
g_{\mathcal{P}}\big(-g_{\mathcal{P}^!}(-t)\big)=t.
\end{equation}
\end{theorem}

\begin{proof}
If $\calP$ is quadratic Koszul, then
its minimal model is isomorphic to the dual bar construction 
$\Dbar(\calP^!)$ of its Koszul dual $\calP^!$. The dual bar
construction is, as a graded operad, generated by the
$\Sigma$-collection $\downarrow \hskip -.1em  \overline{\rule{0pt}{.9em}{\bf s}
  \calP^!}^\# = \{\uparrow^{a-2}
\hskip -.1em  \calP^!(a)^\#\}_{a \geq 2}$. So, in the
Koszul case
\[
g_M(t) = g_{\downarrow   \overline{{\bf s}
  \calP^!}^\#}(t) = g_{\calP^!}(-t), 
\]  
which, substituted to~(\ref{Jaruska_mi_udelala_svickovou!}), 
gives ~(\ref{zitra_Jarka}).
\end{proof}

Let us close this section with another criterion for Koszulness. 
Denote by $\Gamma^2(M)$ the subcollection of $\Gamma(M)$ spanned by
expressions with precisely two instances of elements of the generating
collection $M$ or, equivalently, by $M$-decorated trees with
two vertices.  We say that the minimal model
$(\Gamma(M),\pa)$ of $\calP$ is {\em quadratic\/} if $\pa(M) \subset
\Gamma^2(M)$. 

\begin{fact}
\label{sec:dual-quadr-oper-1}
A quadratic Koszul operad has a quadratic minimal model.
\end{fact}

Indeed, if $\calP$ is Koszul, by Proposition~\ref{sec:dual-quadr-oper}
so is $\calP^!$. This, by definition, means that the natural map
$\Dbar(\calP^!) \to \calP$ is a homology isomorphism, therefore it is
a quadratic minimal model of $\calP$.

We are aware that Fact~\ref{sec:dual-quadr-oper-1} is a very
simple-minded Koszulness test. Yet, we will see in
Section~\ref{sec:cohom-algebr-over} that the non-Koszulness of the
operad $\tildeAss$ for anti-associative algebras can be proved by
showing that it does not admit a quadratic minimal model.  It is also
possible that the non-Koszulness of the operads $\tAss^n_d$ introduced
in the following section can be, for $n \geq 8$ and $d$ odd,
established using Fact~\ref{sec:dual-quadr-oper-1}, while the
Ginzburg-Kapranov test (Theorem~\ref{gk}) may not be
determinative. See also a discussion in~\cite{n-alg}.

\section{Four families of $n$-ary algebras} 
\label{Jaruska_je_moje_pusina.}

We introduce four families of quadratic operads and describe their
Koszul duals. These families cover most of examples of `$n$-ary
algebras' with one operation without symmetry which we were able to find in the
literature.  

Let $V$ be a graded vector space, $n \geq 2$, and $\mu : \otexp Vn \to V$ 
a degree $d$ multilinear operation symbolized by
\begin{center}
\unitlength=1.000000pt
\begin{picture}(60.00,40.00)(0.00,30.00)
\thicklines
\put(22.00,30.00){\makebox(0.00,0.00){$\cdots$}}
\put(42.00,30.00){\makebox(0.00,0.00){$\cdots$}}
\put(30.00,50.00){\makebox(0.00,0.00){$\bullet$}}
\put(26.00,50.00){\makebox(0.00,0.00)[rb]{\scriptsize $\mu$}}
\put(70.00,50.00){\makebox(0.00,0.00){.}}
\put(30.00,50.00){\line(0,-1){20.00}}
\put(30.00,50.00){\line(3,-2){30.00}}
\put(30.00,50.00){\line(-1,-1){20.00}}
\put(30.00,50.00){\line(-3,-2){30.00}}
\put(30.00,50.00){\line(0,1){20.00}}
\put(0.00,26.00){\makebox(0.00,0.00)[rt]{\scriptsize $1$}}
\put(12.00,26.00){\makebox(0.00,0.00)[rt]{\scriptsize $2$}}
\put(65.00,26.00){\makebox(0.00,0.00)[rt]{\scriptsize $n$}}
\end{picture}
\end{center}
We say that $A = (V,\mu)$ is a {\em degree $d$ totally associative
$n$-ary algebra\/} if, for each $1 \leq i,j \leq n$,  
\[
\mu\left(\id^{\otimes i-1} \otimes \mu \otimes \id^{\otimes
  n-i}\right) 
= \mu\left(\id^{\otimes j-1} \otimes \mu \otimes \id^{\otimes
  n-j}\right),
\] 
where $\id : V \to V$ is the identity map. Graphically, we demand that
\begin{center}
\unitlength=1.000000pt
\begin{picture}(60.00,70.00)(0.00,0.00)
\thicklines
\put(42.00,30.00){\makebox(0.00,0.00){$\cdots$}}
\put(30.00,50.00){\makebox(0.00,0.00){$\bullet$}}
\put(26.00,50.00){\makebox(0.00,0.00)[rb]{\scriptsize $\mu$}}
\put(30.00,50.00){\line(3,-2){30.00}}
\put(30.00,50.00){\line(-1,-1){20.00}}
\put(30.00,50.00){\line(-3,-2){30.00}}
\put(30.00,50.00){\line(0,1){20.00}}
\put(19.5,30.00){\makebox(0.00,0.00){$\cdots$}}
\put(29.00,23.00){\makebox(0.00,0.00)[l]{\scriptsize $i$th input}}
\put(30.00,50.00){\line(-1,-6){5.70}}
\put(-6,0){
\put(32.00,-5.00){\makebox(0.00,0.00){$\cdots$}}
\put(30.00,15.00){\makebox(0.00,0.00){$\bullet$}}
\put(26.00,15.00){\makebox(0.00,0.00)[rb]{\scriptsize $\mu$}}
\put(30.00,15.00){\line(3,-2){30.00}}
\put(30.00,15.00){\line(-1,-1){20.00}}
\put(30.00,15.00){\line(-3,-2){30.00}}
}
\end{picture}
\hskip 10pt \raisebox{30pt}{=} \hskip 10pt
\unitlength=1.000000pt
\begin{picture}(60.00,70.00)(0.00,0.00)
\thicklines
\put(44.00,30.00){\makebox(0.00,0.00){$\cdots$}}
\put(30.00,50.00){\makebox(0.00,0.00){$\bullet$}}
\put(26.00,50.00){\makebox(0.00,0.00)[rb]{\scriptsize $\mu$}}
\put(30.00,50.00){\line(3,-2){30.00}}
\put(30.00,50.00){\line(-1,-1){20.00}}
\put(30.00,50.00){\line(-3,-2){30.00}}
\put(30.00,50.00){\line(0,1){20.00}}
\put(24.00,30.00){\makebox(0.00,0.00){$\cdots$}}
\put(41.00,23.00){\makebox(0.00,0.00)[l]{\scriptsize $j$th input}}
\put(30.00,50.00){\line(1,-6){5.70}}
\put(6,0){
\put(32.00,-5.00){\makebox(0.00,0.00){$\cdots$}}
\put(30.00,15.00){\makebox(0.00,0.00){$\bullet$}}
\put(26.00,15.00){\makebox(0.00,0.00)[rb]{\scriptsize $\mu$}}
\put(30.00,15.00){\line(3,-2){30.00}}
\put(30.00,15.00){\line(-1,-1){20.00}}
\put(30.00,15.00){\line(-3,-2){30.00}}
}
\end{picture}
\end{center}
for each $i$, $j$ for which the above compositions make sense. Observe
that degree $0$ totally associative $2$-algebras are ordinary
associative algebras. 

In the following definitions, $\Gamma(\mu)$ will
denote the free operad on the $\Sigma$-module $E_\mu$ with
\[
E_\mu(a) = \cases{\mbox{the regular representation }  \bfk[\Sigma_n]
  \mbox{ generated by } \mu}{if $a = n$ and}0{otherwise.}
\]

\begin{definition}
We denote $\tAss^n_d$ the operad for totally associative $n$-ary
algebras with operation in degree $d$, that is,
\[
\tAss^n_d : =\Gamma (\mu)/(R_{\sstAss^n_d} )
\] 
with $\mu$ an arity $n$ generator of degree $d$ and
\[
R_{\sstAss^n_d} : =\Span \left\{\mu \circ_i\mu- \mu \circ_j\mu, 
\ {\rm for} \ i,j=1,\ldots ,n \right\}.
\]
\end{definition}

We call $A = (V,\mu)$ a {\em degree $d$ partially associative
$n$-ary algebra\/} if the following single axiom is satisfied:  
\begin{equation}
\label{Jaruska_sibalsky_mrka.}
\sum_{i=1}^n (-1)^{(i+1)(n-1)}
\mu\left(\id^{\otimes i-1} \otimes \mu \otimes \id^{\otimes n-i}\right) =0.
\end{equation}  

Degree $0$ partially associative $2$-ary algebras are classical
associative algebras. More interesting observation is that degree
$(n-2)$ partially associative $n$-ary algebras are the same as
$A_\infty$-algebras $A = (V,\mu_1,\mu_2,\ldots)$~\cite[\S1.4]{markl:JPAA92} 
which are ``meager'' in that they satisfy $\mu_k = 0$ for $k \not=n$.
Symmetrizations of these {\em meager $A_\infty$-algebras\/} are 
{\em Lie $n$-algebras\/} in the sense of~\cite{hanlon-wachs:AdvMa:95}. 

\begin{definition}
We denote $\pAss^n_d$ the operad for partially associative $n$-ary
algebras with operation in degree $d$. Explicitly,
\[
\pAss^n_d:=
\Gamma (\mu)/\left(\rule{0pt}{1em}\right. 
\sum_{i=1}^n (-1)^{(i+1)(n-1)}\mu \circ_i
\mu \left. \rule{0pt}{1em} \right)
\]
with $\mu$ a generator of degree $d$ and arity $n$.
\end{definition}

It follows from the above remarks that $\tAss^2_0= \pAss^2_0=\Ass$,
where $\Ass$ is the operad for associative algebras.
We are going to introduce the remaining two families of
operads. Recall that $\osusp$ denotes the operadic suspension and
$\osusp^{-1}$ the obvious inverse operation.

\begin{definition}
We define $\ttildeAss^n_d:= \osusp \tAss^n_{d-n+1}$ and
$\ptildeAss^n_d:= \osusp^{-1} \pAss^n_{d+n-1}$.
\end{definition}

We leave as an exercise to verify that $\ttildeAss^n_d$-algebras are
structures $A = (V,\mu)$, where $\mu : \otexp Vn \to V$ is a degree
$d$ linear map satisfying,
for each $1 \leq i,j \leq n$,  
\[
(-1)^{i(n+1)}
\mu\left(\id^{\otimes i-1} \otimes \mu \otimes \id^{\otimes
  n-i}\right) 
= (-1)^{j(n+1)}\mu\left(\id^{\otimes j-1} \otimes \mu \otimes \id^{\otimes
  n-j}\right).
\] 
Likewise,  $\ptildeAss^n_d$-algebras are similar structures, but this
time satisfying
\[
\sum_{i=1}^n \mu\left(\id^{\otimes i-1} 
\otimes \mu \otimes \id^{\otimes n-i}\right) =0.
\]  

\begin{definition}
\label{Jar}
Let $\tildeAss := \ttildeAss^2_0 = \ptildeAss^2_0$. Explicitly,
$\tildeAss$-algebras are structures $A = (V,\mu)$ with a degree $0$
bilinear operation $\mu : V \ot V \to V$ satisfying
\[
\mu(\mu \ot \id) + \mu(\id \ot \mu) =0
\]
or, in elements
\begin{equation}
\label{Jaruska_je_pusinka}
a(bc) + (ab)c = 0,
\end{equation}
for $a,b,c \in V$. We call these objects {\em anti-associative\/} algebras.
\end{definition}

Anti-associative algebras can be viewed as associative algebras with
the associativity taken with the opposite sign which explains their
name. Similarly, $\tAss^2_1=\pAss^2_1$-algebras are associative
algebras with operation of degree~$1$. The corresponding, essentially
equivalent, operads are the simplest examples of non-Koszul operads,
as we will see in Section~\ref{JarunKa}. The proof of the following
proposition is an exercise.

\begin{proposition}
\label{JarKA}
For each $n \geq 2$ and $d$, $(\tAss^n_d)^! = \pAss^n_{-d +n-2}$,
$(\pAss^n_d)^! = \tAss^n_{-d +n-2}$,  $(\ttildeAss^n_d)^! = \ptildeAss^n_{-d +n-2}$ and  
$(\ptildeAss^n_d)^! = \ttildeAss^n_{-d +n-2}$, 
\end{proposition}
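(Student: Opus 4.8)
The plan is to verify the first identity $(\tAss^n_d)^! = \pAss^n_{-d+n-2}$ directly from the definition of the quadratic dual, and then obtain the other three identities formally, by combining this first computation with the suspension relations of Definition~\ref{Jar} and the behaviour of the Koszul dual under operadic suspension.

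First I would compute $(\tAss^n_d)^!$. By definition $\tAss^n_d = \Gamma(\mu)/(R_{\sstAss^n_d})$ with $\mu$ of arity $n$ and degree $d$, and $R_{\sstAss^n_d} \subset \Gamma(\mu)(2n-1)$ is spanned by the differences $\mu\circ_i\mu - \mu\circ_j\mu$. The ambient space $\Gamma(\mu)(2n-1)$ decomposes, as a $\Sigma_{2n-1}$-module built from the regular representation at arity $n$, into the $n$ summands corresponding to the insertion slots $i=1,\dots,n$, each a copy of $\bfk[\Sigma_{2n-1}]$-worth of composites $\mu\circ_i\mu$ suitably induced. The subspace $R_{\sstAss^n_d}$ is the ``difference subspace'' $\{(r_1,\dots,r_n): \sum r_i\text{-type relations}\}$; concretely it is the kernel of the sum map in an appropriate sense, of codimension equal to one copy of the regular representation. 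Dualizing and using the explicit pairing~(\ref{1}) — whose sign is exactly $\delta_{ij}(-1)^{(i+1)(n+1)}$ — the annihilator $R^\perp$ inside $\Gamma(E^\vee_\mu)(2n-1)$ is spanned by a single ``diagonal'' element, which after transporting the sign $(-1)^{(i+1)(n+1)}$ through the pairing becomes $\sum_{i=1}^n (-1)^{(i+1)(n+1)}\,\mu^\vee\circ_i\mu^\vee$. Now $E^\vee_\mu(n) = \mathrm{sgn}_n\otimes\uparrow^{n-2}E_\mu(n)^\#$, so the generator $\mu^\vee$ has degree $-d+n-2$; one checks that after absorbing the sign representation $\mathrm{sgn}_n$ the sign $(-1)^{(i+1)(n+1)}$ in the relation is precisely converted into the sign $(-1)^{(i+1)(n-1)}$ appearing in~(\ref{Jaruska_sibalsky_mrka.}) (note $(-1)^{(i+1)(n+1)}=(-1)^{(i+1)(n-1)}$ anyway since the exponents differ by $2(i+1)$). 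Hence $R^\perp = (\sum_{i=1}^n(-1)^{(i+1)(n-1)}\mu^\vee\circ_i\mu^\vee)$ and $(\tAss^n_d)^! = \Gamma(\mu^\vee)/(R^\perp) = \pAss^n_{-d+n-2}$. This is the substantive step, and the only delicate point is bookkeeping the interaction of the sign representation $\mathrm{sgn}_n$ on the generator with the signs in the pairing and in the partial-associativity relation; I expect this sign-chase to be the main obstacle, everything else being formal.

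Next, the identity $(\pAss^n_d)^! = \tAss^n_{-d+n-2}$ follows because quadratic duality is an involution on quadratic operads (up to the canonical degree shift), so applying ${}^!$ to $(\tAss^n_{-d+n-2})^! = \pAss^n_{-(-d+n-2)+n-2} = \pAss^n_{d}$ — which is the first identity with $d$ replaced by $-d+n-2$ — and using $(\calP^!)^! \cong \calP$ gives the claim. Alternatively one repeats the annihilator computation for the single partial-associativity relation, which is entirely parallel.

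Finally, for the tilde versions I would use Definition~\ref{Jar}: $\ttildeAss^n_d = \osusp\,\tAss^n_{d-n+1}$ and $\ptildeAss^n_d = \osusp^{-1}\pAss^n_{d+n-1}$, together with the standard fact that the Koszul dual intertwines with operadic suspension via $(\osusp\calP)^! \cong \osusp^{-1}(\calP^!)$ (this is where the normalization of $\osusp$ recalled after Definition~\ref{Zitra_budu_s_Jaruskou!} is used). Then
\[
(\ttildeAss^n_d)^! = (\osusp\,\tAss^n_{d-n+1})^! = \osusp^{-1}\big((\tAss^n_{d-n+1})^!\big)
= \osusp^{-1}\,\pAss^n_{-(d-n+1)+n-2} = \osusp^{-1}\,\pAss^n_{-d+n-1},
\]
and the right-hand side is exactly $\ptildeAss^n_{-d+n-2}$ by Definition~\ref{Jar}. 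The computation of $(\ptildeAss^n_d)^!$ is the same with the roles of $\osusp$ and $\osusp^{-1}$, and of $\tAss$ and $\pAss$, exchanged, using the second identity. This closes all four cases. \qed
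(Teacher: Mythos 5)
Your approach is the intended one: the paper gives no argument at all (it declares the proposition an exercise), and the direct computation of $R^\perp$ via the pairing~(\ref{1}) — noting $(-1)^{(i+1)(n+1)}=(-1)^{(i+1)(n-1)}$ and that the generator of $E^\vee$ sits in degree $-d+n-2$ — followed by involutivity of $(-)^!$ and its compatibility with operadic suspension, is exactly how one is meant to proceed. One bookkeeping slip in your final display: $-(d-n+1)+n-2=-d+2n-3$, not $-d+n-1$; since $\ptildeAss^n_e=\osusp^{-1}\pAss^n_{e+n-1}$, the correct intermediate $\osusp^{-1}\pAss^n_{-d+2n-3}$ equals $\ptildeAss^n_{(-d+2n-3)-(n-1)}=\ptildeAss^n_{-d+n-2}$, so your conclusion stands (your two subscript errors cancel), but the chain of equalities as written should be repaired.
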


\section{Sundry facts about $n$-ary algebras}
\label{Zitra-uz-budu-s-Jaruskou}

In this section we discuss two constructions (galgalim and higher
associahedra) that, in some sense, generalize classical Stasheff's
associahedra to the realms of partially resp.\ totally associative
$n$-algebras. We also show how galgalim encode some properties of free
partially associative algebras. Necessary facts about the associahedra
can be gained from \cite[II.1.6]{markl-shnider-stasheff:book} or from the
original source~\cite{stasheff:TAMS63}.

\vskip .3em
\noindent 
{\bf Galgalim.}  This part is devoted to degree $0$ partially
associative $n$-algebras, i.e.~to algebras over the operad
$\pAss^n_0$. The fact that, for $n \geq 3$, their defining
axiom~(\ref{Jaruska_sibalsky_mrka.})  has more than two terms rules
out the existence of an analog of the Stasheff associahedra -- the
edges of such a hypothetic polyhedra ought to have more than two
end-points. One can, however, still draw some graphs that visualize
the relations among the axioms, similar to the Lie-hedron constructed
in~\cite{markl-shnider:coherence}.  Their nature is somehow dual to
the nature of the associahedra in that their vertices are indexed by
the defining {\em relations\/}, while their edges are labelled by the
iterated structure {\em operations\/}.

Let us start with the case $n=2$, when $\pAss^n_0$ is the operad for
associative algebras, so the
associahedra actually exist. 
There are five ways to apply a binary operation to four elements:
\begin{equation}
\label{eq:5}
((\bullet\bullet)\bullet)\bullet,\
(\bullet(\bullet\bullet))\bullet,\
(\bullet\bullet) (\bullet\bullet),\
\bullet((\bullet\bullet)\bullet),\
\bullet(\bullet(\bullet\bullet)).
\end{equation}
There are five relations between these expressions obtained by one
instance of the axiom~(\ref{Jaruska_sibalsky_mrka.}) which is, for $n=2$,
the associativity, namely
\begin{align}
\nonumber 
(\bullet\bullet) (\bullet\bullet)& -\bullet(\bullet(\bullet\bullet)) =0
  \mbox { which we denote } \bullet\!\bullet(\bullet\bullet),
\\ \nonumber 
\bullet(\bullet(\bullet\bullet))&- \bullet((\bullet\bullet)\bullet) =0
 \mbox { which we denote } \bullet\!(\tri),
\\
\label{eq:6}
\bullet((\bullet\bullet)\bullet) &- (\bullet(\bullet\bullet))\bullet =0
\mbox { which we denote } \bullet\!(\bullet\bullet)\bullet,
\\ \nonumber 
(\bullet(\bullet\bullet))\bullet &- ((\bullet\bullet)\bullet)\bullet =0
\mbox { which we denote } (\tri)\bullet, \mbox { and}
\\ \nonumber 
((\bullet\bullet)\bullet)\bullet &- (\bullet\bullet) (\bullet\bullet) =0
\mbox { which we denote } (\bullet\bullet)\!\bullet\!\bullet.
\end{align}

We call these relations {\em elementary\/}.  Observe that each symbol
listed in~(\ref{eq:5}) appears in precisely two elementary relations
of~(\ref{eq:6}).  So we may draw a graph with edges labelled by the
five symbols in~(\ref{eq:5}) which share a common vertex if and only
if they labels appear in the same relation of~(\ref{eq:6}). The common
vertex emerging in this way will be labelled by this relation. We get
a graph with five vertices and five edges:

\begin{equation}
\label{G2}
\raisebox{-2.3cm}{\rule{0pt}{0pt}}
\unitlength=.9cm
\begin{picture}(6,1.8)(-3.00,0)
\thicklines
\put(0,0){\hustykrouzek}    
\put(0,2){\makebox(0,0){$\bullet$}}
\put(0,2.2){\makebox(0,0)[b]{\scriptsize $\bullet(\b\b)\b$}}
\put(1.902,0.618){\makebox(0,0){$\bullet$}}
\put(2.002,0.718){\makebox(0,0)[lb]{\scriptsize  $\bullet(\tri)$}}
\put(-1.902,0.618){\makebox(0,0){$\bullet$}}
\put(-2.002,0.718){\makebox(0,0)[rb]{\scriptsize  $(\tri)\b$}}
\put(1.175,-1.618){\makebox(0,0){$\bullet$}}
\put(1.275,-1.718){\makebox(0,0)[lt]{\scriptsize$\dve(\dve)$}}
\put(-1.175,-1.618){\makebox(0,0){$\bullet$}}
\put(-1.275,-1.718){\makebox(0,0)[rt]{\scriptsize$(\dve)\dve$}}
\put(1.4,1.6){\makebox(0,0)[lb]{\scriptsize  $\b((\b\b)\b)$}}
\put(-1.4,1.6){\makebox(0,0)[rb]{\scriptsize  $(\b(\b\b))\b$}}
\put(2.1,-.7){\makebox(0,0)[lt]{\scriptsize  $\b(\b(\b\b))$}}
\put(-2.1,-.7){\makebox(0,0)[rt]{\scriptsize  $((\b\b)\b)\b$}}
\put(0,-2.2){\makebox(0,0)[t]{\scriptsize  $(\b\b)(\b\b)$}}
\thinlines
\multiput(-1.26,1.54)(0.12,0){22}{\makebox(0,0){$\cdot$}}
\multiput(-1.26,1.54)(-0.03,-.1){22}{\makebox(0,0){$\cdot$}}
\multiput(1.26,1.54)(0.03,-.1){22}{\makebox(0,0){$\cdot$}}
\multiput(0,-2)(0.085,.065){22}{\makebox(0,0){$\cdot$}}
\multiput(0,-2)(-0.085,.065){22}{\makebox(0,0){$\cdot$}}
\put(0,0){\makebox(0,0){$G^2$}}
\end{picture}
\end{equation}
which is dual to the $1$-skeleton of the Stasheff
pentagon $K_4$ indicated by the dotted lines.

For $n=3$, there are $12$ ways to multiply  $7$ elements by a ternary
operation:
\begin{equation}
\label{eq:7}
\begin{aligned}
((\tri)\dve)\dve,\ (\b(\dve)\b)\dve,\ (\dve(\tri))\dve,&\
\b((\tri)\dve)\b,\ \b(\b(\dve)\b)\b,\ \b(\dve(\tri))\b,
\\
\dve((\tri)\dve),\ \dve(\b(\dve)\b)\b,\ \dve(\dve(\tri)),&\
(\tri)(\tri)\b,\ (\tri)\b(\tri),\ \b(\tri)(\tri)
\end{aligned}
\end{equation}
and $8$ elementary relations between these terms obtained by one
instance of the partial associativity $(\tri)\dve + \b(\tri)\b +\,
\dve(\tri)$, namely
\begin{align*}
\R1 \mbox { denoting } &\ (\dve(\tri))\dve + \b(\b(\tri)\b)\!\b +
\dve((\tri)\dve) =0,
\\
\R2\mbox { denoting } &\ \dve((\tri)\dve) + \dve(\b(\tri)\b) +
\dve(\dve(\tri))  =0,
\\
\R3 \mbox { denoting }  &\  
\b\!(\tri)(\tri)\ + (\tri)\!\b\!(\tri) +  \dve(\dve(\tri)) =0.
\\
\R4\mbox { denoting } &\ (\b(\tri)\b)\dve + \b((\tri)\dve)\!\b
+ \b\!(\tri)(\tri) =0,
\\
\R5\mbox { denoting } &\ \b\!((\tri)\dve)\!\b + \b\!(\b(\tri)\b)\!\b +
\b\!(\dve(\tri))\b =0,
\\
\R6\mbox { denoting } &\ (\tri)(\tri)\!\b + \b\!(\dve(\tri))\!\b
+ \dve(\b(\tri)\b)  =0,
\\
\R7 \mbox { denoting }  &\ ((\tri)\dve)\dve + 
 (\tri)\!\b\!(\tri) + (\tri)(\tri) \b  =0,  \mbox { and}
\\
\R8\mbox { denoting } &\ ((\tri)\dve)\dve + (\b(\tri)\b)\dve +
(\dve(\tri))\dve =0,
\end{align*}
Each element of~(\ref{eq:7}) again appears in precisely $2$
elementary relations. The corresponding graph with $12$ edges indexed by
expressions~(\ref{eq:7}) and $8$ vertices labelled by elementary
relations is the wheel with eight spikes:
\begin{equation}
\label{G3}
\raisebox{-2.3cm}{\rule{0pt}{0pt}}
\unitlength=.9cm
\begin{picture}(6,2.6)(-3.00,0)
\thicklines
\put(0,0){\hustykrouzek}    
\put(0,2){\makebox(0,0){$\bullet$}}
\put(0,2.2){\makebox(0,0)[b]{\scriptsize $\R1$}}
\put(0,2){\line(0,-1){4}}
\put(-1.42,-1.42){\line(1,1){2.84}}
\put(-1.42,1.42){\line(1,-1){2.84}}
\put(-2,0){\line(1,0){4}}
\put(2,0){\makebox(0,0){$\bullet$}}
\put(2.2,0){\makebox(0,0)[l]{\scriptsize $\R3$}}
\put(0,-2){\makebox(0,0){$\bullet$}}
\put(0,-2.2){\makebox(0,0)[t]{\scriptsize $\R5$}}
\put(-2,0){\makebox(0,0){$\bullet$}}
\put(-2.2,0){\makebox(0,0)[r]{\scriptsize $\R7$}}
\put(1.42,1.42){\makebox(0,0){$\bullet$}}
\put(1.52,1.52){\makebox(0,0)[lb]{\scriptsize $\R2$}}
\put(-1.42,1.42){\makebox(0,0){$\bullet$}}
\put(-1.52,1.52){\makebox(0,0)[rb]{\scriptsize $\R8$}}
\put(1.42,-1.42){\makebox(0,0){$\bullet$}}
\put(1.52,-1.52){\makebox(0,0)[lt]{\scriptsize $\R4$}}
\put(-1.42,-1.42){\makebox(0,0){$\bullet$}}
\put(-1.52,-1.52){\makebox(0,0)[tr]{\scriptsize $\R6$}}
\end{picture}
\end{equation}

Observe that elementary relations have a left-right mirror
symmetry: $\R1$ and $\R5$ are self-symmetric, while the mirror image
of $\R2$ is $\R8$, the image of $\R3$ is $\R7$ and the image of $\R4$
is $\R6$. This symmetry is reflected by the left-right symmetry of (\ref{G3}).

For $n=4$, there are $22$ ways of applying a $4$-ary operation to $10$
elements, and $11$ elementary relations among these elements. 
The resulting graph is shown in Figure~\ref{G4}. 
The $5$th galgal (case $n=5$) has $14$ vertices and $35$ edges, its
portrait is given in Figure~\ref{G5}. We call these figures
{\em galgalim\/} (plural of {\em galgal\/}), the Hebrew for wheel.

\begin{figure}
\begin{center}
\unitlength=1.5cm
\begin{picture}(6,4)(-3.00,-2)
\thicklines
\put(0,0){\hustykrouzek}    
\put(0,0){\makebox(0,0){$G^4$}}    
\put(0,2){\makebox(0,0){$\bullet$}} 
\multiput(0,2)(0.009,-.0103){220}{\makebox(0,0){$\cdot$}} 
\put(1.08,1.682){\makebox(0,0){$\bullet$}} 
\multiput(1.08,1.682)(0.002,-.0133){225}{\makebox(0,0){$\cdot$}}
\put(1.818,0.830){\makebox(0,0){$\bullet$}} 
\multiput(1.818,0.830)(-0.0056,-.0122){225}{\makebox(0,0){$\cdot$}} 
\put(1.978,-0.284){\makebox(0,0){$\bullet$}} 
\multiput(1.978,-0.284)(-0.0117,-.00755){220}{\makebox(0,0){$\cdot$}} 
\put(1.5294,-1.309){\makebox(0,0){$\bullet$}} 
\multiput(1.5294,-1.309)(-0.014,0){220}{\makebox(0,0){$\cdot$}}
\put(.5634,-1.918){\makebox(0,0){$\bullet$}} 
\multiput(.5634,-1.918)(-0.0117,.00755){220}{\makebox(0,0){$\cdot$}}
\put(-1.08,1.682){\makebox(0,0){$\bullet$}} 
\multiput(-1.08,1.682)(0.013,-.0038){220}{\makebox(0,0){$\cdot$}} 
\put(-1.818,0.830){\makebox(0,0){$\bullet$}} 
\multiput(-1.818,0.830)(0.013,.0038){220}{\makebox(0,0){$\cdot$}} 
\put(-1.978,-0.284){\makebox(0,0){$\bullet$}} 
\multiput(-1.978,-0.284)(0.009,.0103){220}{\makebox(0,0){$\cdot$}} 
\put(-1.5294,-1.309){\makebox(0,0){$\bullet$}} 
\multiput(-1.5294,-1.309)(0.002,.0133){225}{\makebox(0,0){$\cdot$}}
\put(-.5634,-1.918){\makebox(0,0){$\bullet$}} 
\multiput(-.5634,-1.918)(-0.0056,.0122){225}{\makebox(0,0){$\cdot$}}
\thinlines
\put(-.3,2.1){\makebox(0,0){\scriptsize $+$}}
\put(-.3,2.1){{\circle{.15}}}
\put(.3,2.1){\makebox(0,0){\scriptsize $-$}}
\put(.3,2.1){{\circle{.15}}}
\put(-.45,1.7){\makebox(0,0){\scriptsize $-$}}
\put(-.45,1.7){{\circle{.15}}}
\put(.45,1.7){\makebox(0,0){\scriptsize $+$}}
\put(.45,1.7){{\circle{.15}}}
\end{picture}
\end{center}
\caption{\label{G4}$4$th galgal $G^4$.}
\end{figure}
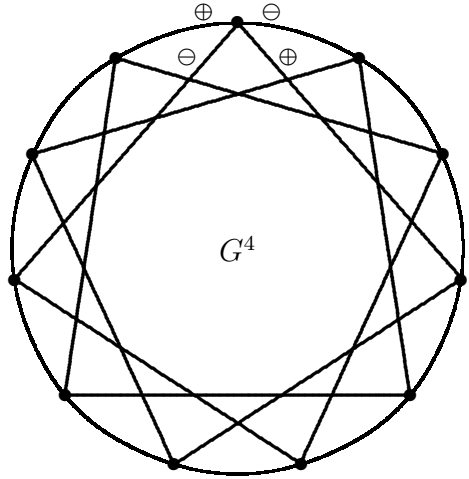

\begin{figure}
\begin{center}
\unitlength=1.5cm
\begin{picture}(6,4)(-3.00,-2)
\thicklines
\put(0,0){\hustykrouzek}
\put(0,2){\makebox(0,0){$\bullet$}} 
\multiput(0,2)(0.0097,-0.01225){200}{\makebox(0,0){$\cdot$}} 
\multiput(0,2)(0,-0.02){200}{\makebox(0,0){$\cdot$}} 
\put(0.867,1.809){\makebox(0,0){$\bullet$}} 
\multiput(0.867,1.809)(0.00348,-0.01528){200}{\makebox(0,0){$\cdot$}} 
\multiput(0.867,1.809)(-0.00867,-0.01809){200}{\makebox(0,0){$\cdot$}} 
\put(1.563,1.247){\makebox(0,0){$\bullet$}} 
\multiput(1.563,1.247)(-0.00348,-0.01528){200}{\makebox(0,0){$\cdot$}} 
\multiput(1.563,1.247)(-0.01563,-0.01247){200}{\makebox(0,0){$\cdot$}} 
\put(1.949,0.445){\makebox(0,0){$\bullet$}} 
\multiput(1.949,0.445)(-0.0097,-0.01225){200}{\makebox(0,0){$\cdot$}} 
\multiput(1.949,0.445)(-0.019492,-0.00445){200}{\makebox(0,0){$\cdot$}} 
\put(1.949,-0.445){\makebox(0,0){$\bullet$}} 
\multiput(1.949,-0.445)(- 0.01408,-0.00682){200}{\makebox(0,0){$\cdot$}} 
\multiput(1.949,-0.445)(-0.019492,0.00445){200}{\makebox(0,0){$\cdot$}} 
\put(1.563,-1.247){\makebox(0,0){$\bullet$}} 
\multiput(1.563,-1.247)(-0.0155,0){200}{\makebox(0,0){$\cdot$}} 
\multiput(1.563,-1.247)(-0.01563,0.01247){200}{\makebox(0,0){$\cdot$}} 
\put(0.867,-1.809){\makebox(0,0){$\bullet$}} 
\multiput(0.867,-1.809)( -0.01408,0.00682){200}{\makebox(0,0){$\cdot$}}
\multiput(0.867,-1.809)( -0.00867,0.01809){200}{\makebox(0,0){$\cdot$}}
\put(0,-2){\makebox(0,0){$\bullet$}} 
\multiput(0,-2)(-0.0097,0.01225){200}{\makebox(0,0){$\cdot$}} 
\put(-0.867,-1.809){\makebox(0,0){$\bullet$}} 
\multiput(-0.867,-1.809)(-0.00348,0.01528){200}{\makebox(0,0){$\cdot$}}
\put(-1.563,-1.247){\makebox(0,0){$\bullet$}} 
\multiput(-1.563,-1.247)(0.00348,0.01528){200}{\makebox(0,0){$\cdot$}} 
\put(-1.949,-0.445){\makebox(0,0){$\bullet$}} 
\multiput(-1.949,-0.445)(0.0097,0.01225){200}{\makebox(0,0){$\cdot$}} 
\put(-1.949,0.445){\makebox(0,0){$\bullet$}} 
\multiput(-1.949,0.445)(0.01408,0.00682){200}{\makebox(0,0){$\cdot$}} 
\put(-1.563,1.247){\makebox(0,0){$\bullet$}} 
\multiput(-1.563,1.247)(0.0155,0){200}{\makebox(0,0){$\cdot$}} 
\put(-0.867,1.809){\makebox(0,0){$\bullet$}} 
\multiput(-0.867,1.809)( 0.01408,-0.00682){200}{\makebox(0,0){$\cdot$}} 
\end{picture}
\end{center}
\caption{\label{G5}$5$th galgal $G^5$ (the central point is not a vertex).}
\end{figure}
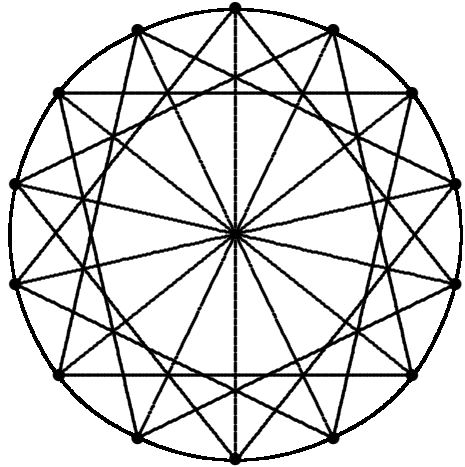

Galgalim can be used to analyze the structure of free $n$-ary
algebras. Let us, for instance, investigate possible linear dependence
of the five elementary relations~(\ref{eq:6}) among binary
bracketings~(\ref{eq:5}) of five variables. We need to solve
\begin{equation}
\label{eq:8}
 a_1 \b\!(\dve)\!\b +\ a_2 \b\!(\tri) + a_3\ \dve(\dve) +
 a_4\, (\dve)\dve + a_5\, (\tri) \b =0, 
\end{equation}
for some scalars $\Rada a15 \in \bfk$. If we view the coefficients $\Rada a15$
as decorations of the corresponding vertices of the $2$nd
galgal~(\ref{G2}), the above relation is obviously satisfied if and
only if the decorations of vertices connected by an edge agree. 
Therefore~(\ref{eq:8}) holds if and only if 
\[
a_1 = a_2 = a_3 = a_4 = a_5.
\]
The last condition is fulfilled for instance by $(\Rada a15) = (\rada
11)$, so the five elementary relations~(\ref{eq:6}) are {\em not\/}
linearly independent. This is of course elementary and well-known.

Let us proceed to the ternary case. We have eight elementary relations
which we denote, to save the space, $\rada {\R1}{\R8}$.
We consider the equation
\begin{equation}
\label{eq:9}
a_1\ \R1 + \cdots + a_8\ \R8 = 0,
\end{equation}
with some scalars $\Rada a18 \in \bfk$ which we again view as
decorations of the vertices of the $3$th galgal $G^3$. Since all the
terms in the elementary relations have the + signs,~(\ref{eq:9}) is
satisfied if and only if the decorations of two vertices connected by
an edge {\em differ\/} by the sign. The presence of closed paths of
odd lengths excludes this possibility. For instance, one has the circle
$\R1 \edge \R2\edge\R3\edge\R4\edge\R5\edge\R1$, so one requires
\[
+a_1 = -a_2 = +a_3 = -a_4 = +a_5 = -a_1
\] 
which implies $a_1 = -a_1$, therefore $a_1 = 0$ thus $a_i = 0$ for all
$1 \leq i \leq 5$. The vanishing of the remaining coefficients
in~(\ref{eq:9}) can be established in the same way.
We conclude that elementary relations for
ternary partially associative algebras are linearly independent. 
Observe that we did not need to know the labels of the
vertices and edges of the $3$th galgal explicitly, its shape was enough to
establish the linear independence of the elementary relations. We will
see in Remark~\ref{sec:free-part-assoc} 
how $G^3$ helps to understand free partially
associative $3$-algebras.

For $n=4$, axiom~(\ref{Jaruska_sibalsky_mrka.}) and thus also the
elementary relations acquire nontrivial signs. Each half-edge emerging
from a vertex of the $4$th galgal $G^4$ is therefore decorated by the
sign of the corresponding term in the relation labelling the
vertex. Explicit calculations show that this decoration obeys the rule
\begin{center}
\unitlength=.4cm
\begin{picture}(2,2)(0,0)
\put(1,1){\makebox(0,0){$\bullet$}}
\thicklines
\put(0.2,0.2){\line(1,1){1.6}}  
\put(1.8,.2){\line(-1,1){1.6}}  
\put(2.1,2.1){\makebox(0,0){\scriptsize $-$}}
\put(2.1,2.1){{\circle{.9}}}
\put(-.1,-.1){\makebox(0,0){\scriptsize $-$}}
\put(-.1,-.1){{\circle{.9}}}
\put(-.1,2.1){\makebox(0,0){\scriptsize $+$}}
\put(-.1,2.1){{\circle{.9}}}
\put(2.1,-.1){\makebox(0,0){\scriptsize $+$}}
\put(2.1,-.1){{\circle{.9}}}
\end{picture}
\end{center}
meaning that the antipodal half-edges acquire the same sign. It also
turns out that the decorations possesses the rotational symmetry, therefore
the decorations of all half-edges are determined by the decoration of the
half-edges adjacent to the upper vertex shown in Figure~\ref{G4}.
It is immediate to see that two half-edges of the same edge bear the
opposite signs. Therefore the elementary relations are not linearly
independent, but they, as in the binary case, sum up to zero. 

All terms of axiom~(\ref{Jaruska_sibalsky_mrka.}) and therefore
also all terms of the elementary relations for $5$-ary algebras have the
$+$ sign. As in the ternary case, their linear independence is implied by the
existence of paths of odd length in the $5$th galgal $G^5$. We leave
as an exercise to find such paths. The conclusion is that elementary
relations for $5$-ary degree $0$ partially associative algebras are
linearly independent.

\vskip .3em
\noindent 
{\bf Higher associahedra.}
Degree $0$ totally associative $n$-algebras, i.e.~algebras over the
operad $\tAss^n_0$, are, for $n \geq 1$, straightforward
generalizations of associative algebras. Observe, for instance, that
the operad $\tAss^n_0$ is, for each $n \geq 2$, the linearization of
an operad living in the monoidal category of sets and that this
property singles degree $0$ totally associative algebras out from the
four families of $n$-ary algebras introduced in
Section~\ref{Jaruska_je_moje_pusina.}. 

In~\cite{n-alg} we conjectured the existence of an analog $\KK^n =
\{\KK^n(a)\}_{a \geq 1}$ of the Stasheff associahedra for an arbitrary
$n \geq 2$. We also constructed some initial pieces of the
hypothetical $3$-associahedra $\KK^3$. It turned out that the
inductive construction contained some choices. For example, in arity
$7$ we found the following three combinatorially distinct $\KK^3(7)$'s:
\begin{center}
{
\unitlength=.3pt
\begin{picture}(200.00,220.00)(300.00,0.00)
\thicklines
\put(50.00,50.00){\makebox(0.00,0.00){$\bullet$}}
\put(0.00,100.00){\makebox(0.00,0.00){$\bullet$}}
\put(100.00,100.00){\makebox(0.00,0.00){$\bullet$}}
\put(120.00,120.00){\makebox(0.00,0.00){$\bullet$}}
\put(170.00,170.00){\makebox(0.00,0.00){$\bullet$}}
\put(150.00,150.00){\makebox(0.00,0.00){$\bullet$}}
\put(50.00,150.00){\makebox(0.00,0.00){$\bullet$}}
\put(100.00,200.00){\makebox(0.00,0.00){$\bullet$}}
\put(200.00,200.00){\makebox(0.00,0.00){$\bullet$}}
\put(200.00,0.00){\makebox(0.00,0.00){$\bullet$}}
\put(0.00,0.00){\makebox(0.00,0.00){$\bullet$}}
\put(0.00,200.00){\makebox(0.00,0.00){$\bullet$}}
\put(50.00,150.00){\line(1,0){100.00}}
\put(0.00,200.00){\line(1,-1){200.00}}
\put(0.00,0.00){\line(1,1){200.00}}
\put(0.00,200.00){\line(0,-1){200.00}}
\put(200.00,200.00){\line(-1,0){200.00}}
\put(200.00,0.00){\line(0,1){200.00}}
\put(0.00,0.00){\line(1,0){200.00}}
\put(300,0){
\put(65.00,135.00){\makebox(0.00,0.00){$\bullet$}}
\put(80.00,120.00){\makebox(0.00,0.00){$\bullet$}}
\put(40.00,60.00){\makebox(0.00,0.00){$\bullet$}}
\put(100.00,150.00){\makebox(0.00,0.00){$\bullet$}}
\put(0.00,0.00){\line(2,3){80.00}}
\put(100.00,100.00){\makebox(0.00,0.00){$\bullet$}}
\put(170.00,170.00){\makebox(0.00,0.00){$\bullet$}}
\put(150.00,150.00){\makebox(0.00,0.00){$\bullet$}}
\put(50.00,150.00){\makebox(0.00,0.00){$\bullet$}}
\put(200.00,200.00){\makebox(0.00,0.00){$\bullet$}}
\put(200.00,0.00){\makebox(0.00,0.00){$\bullet$}}
\put(0.00,0.00){\makebox(0.00,0.00){$\bullet$}}
\put(0.00,200.00){\makebox(0.00,0.00){$\bullet$}}
\put(50.00,150.00){\line(1,0){100.00}}
\put(0.00,200.00){\line(1,-1){100.00}}
\put(0.00,0.00){\line(1,1){200.00}}
\put(0.00,200.00){\line(0,-1){200.00}}
\put(200.00,200.00){\line(-1,0){200.00}}
\put(200.00,0.00){\line(0,1){200.00}}
\put(0.00,0.00){\line(1,0){200.00}}
}
\put(600,0){
\put(140.00,60.00){\makebox(0.00,0.00){$\bullet$}}
\put(20.00,120.00){\makebox(0.00,0.00){$\bullet$}}
\put(70.00,70.00){\makebox(0.00,0.00){$\bullet$}}
\put(200.00,100.00){\makebox(0.00,0.00){$\bullet$}}
\put(100.00,200.00){\makebox(0.00,0.00){$\bullet$}}
\put(160.00,160.00){\makebox(0.00,0.00){$\bullet$}}
\put(100.00,100.00){\makebox(0.00,0.00){$\bullet$}}
\put(40.00,40.00){\makebox(0.00,0.00){$\bullet$}}
\put(0.00,0.00){\makebox(0.00,0.00){$\bullet$}}
\put(200.00,0.00){\makebox(0.00,0.00){$\bullet$}}
\put(200.00,200.00){\makebox(0.00,0.00){$\bullet$}}
\put(0.00,200.00){\makebox(0.00,0.00){$\bullet$}}
\put(0.00,200.00){\line(4,-1){160.00}}
\put(0.00,200.00){\line(1,-4){40.00}}
\put(200.00,0.00){\line(-1,1){100.00}}
\put(0.00,0.00){\line(1,1){200.00}}
\put(200.00,0.00){\line(-1,0){200.00}}
\put(200.00,200.00){\line(0,-1){200.00}}
\put(0.00,200.00){\line(1,0){200.00}}
\put(0.00,0.00){\line(0,1){200.00}}
}
\end{picture}}
\end{center}
They are convex $2$-dimensional polyhedra with twelve vertices,
sixteen edges and five $2$-dimen\-sional faces.  We refer to
\cite{n-alg} for more details.

\section{Koszulness - the case study}
\label{JarunKa}

This section is devoted to the following statement organized in
the table of Figure~\ref{0}.

\begin{figure}
\begin{center}
{
\unitlength=0.085em
\begin{picture}(250.00,200.00)(0.00,0.00)
\put(25.00,55.00){\makebox(0.00,0.00)[b]{$\pAss^n_d$}}
\put(25.00,135.00){\makebox(0.00,0.00)[b]{$\tAss^n_d$}}
\put(25.00,95.00){\makebox(0.00,0.00)[b]{$\ttildeAss^n_d$}}
\put(25.00,15.00){\makebox(0.00,0.00)[b]{$\ptildeAss^n_d$}}
\put(70.00,06.60){\makebox(0.00,0.00)[b]{$d$ odd}}
\put(70.00,46.60){\makebox(0.00,0.00)[b]{$d$ odd}}
\put(70.00,86.60){\makebox(0.00,0.00)[b]{$d$ odd}}
\put(70.00,126.60){\makebox(0.00,0.00)[b]{$d$ odd}}
\put(70.00,26.60){\makebox(0.00,0.00)[b]{$d$ even}}
\put(70.00,66.60){\makebox(0.00,0.00)[b]{$d$ even}}
\put(70.00,106.60){\makebox(0.00,0.00)[b]{$d$ even}}
\put(70.00,146.60){\makebox(0.00,0.00)[b]{$d$ even}}
\put(210.00,186.60){\makebox(0.00,0.00)[b]{$n > 7$}}
\put(230.00,26.60){\makebox(0.00,0.00)[b]{?}}
\put(190.00,26.60){\makebox(0.00,0.00)[b]{?}}
\put(190.00,46.60){\makebox(0.00,0.00)[b]{?}}
\put(230.00,66.60){\makebox(0.00,0.00)[b]{?}}
\put(230.00,86.60){\makebox(0.00,0.00)[b]{?}}
\put(190.00,106.60){\makebox(0.00,0.00)[b]{?}}
\put(230.00,126.60){\makebox(0.00,0.00)[b]{?}}
\put(190.00,126.60){\makebox(0.00,0.00)[b]{?}}
\put(150.00,26.60){\makebox(0.00,0.00)[b]{no}}
\put(110.00,26.60){\makebox(0.00,0.00)[b]{no}}
\put(110.00,46.60){\makebox(0.00,0.00)[b]{no}}
\put(150.00,66.60){\makebox(0.00,0.00)[b]{no}}
\put(150.00,86.60){\makebox(0.00,0.00)[b]{no}}
\put(110.00,106.60){\makebox(0.00,0.00)[b]{no}}
\put(150.00,126.60){\makebox(0.00,0.00)[b]{no}}
\put(110.00,126.60){\makebox(0.00,0.00)[b]{no}}
\put(230.00,05.00){\makebox(0.00,0.00)[b]{yes}}
\put(190.00,05.00){\makebox(0.00,0.00)[b]{yes}}
\put(150.00,05.00){\makebox(0.00,0.00)[b]{yes}}
\put(110.00,05.00){\makebox(0.00,0.00)[b]{yes}}
\put(230.00,45.00){\makebox(0.00,0.00)[b]{yes}}
\put(190.00,65.00){\makebox(0.00,0.00)[b]{yes}}
\put(150.00,45.00){\makebox(0.00,0.00)[b]{yes}}
\put(110.00,65.00){\makebox(0.00,0.00)[b]{yes}}
\put(230.00,105.00){\makebox(0.00,0.00)[b]{yes}}
\put(190.00,85.00){\makebox(0.00,0.00)[b]{yes}}
\put(150.00,105.00){\makebox(0.00,0.00)[b]{yes}}
\put(230.00,145.00){\makebox(0.00,0.00)[b]{yes}}
\put(190.00,145.00){\makebox(0.00,0.00)[b]{yes}}
\put(150.00,145.00){\makebox(0.00,0.00)[b]{yes}}
\put(110.00,85.00){\makebox(0.00,0.00)[b]{yes}}
\put(110.00,145.00){\makebox(0.00,0.00)[b]{yes}}
\put(230.00,166.60){\makebox(0.00,0.00)[b]{$n$ odd}}
\put(150.00,166.60){\makebox(0.00,0.00)[b]{$n$ odd}}
\put(190.00,166.60){\makebox(0.00,0.00)[b]{$n$ even}}
\put(110.00,166.60){\makebox(0.00,0.00)[b]{$n$ even}}
\put(130.00,190.00){\makebox(0.00,0.00){$n \leq 7$}}
\put(50.00,20.00){\line(1,0){200.00}}
\put(50.00,60.00){\line(1,0){200.00}}
\put(50.00,100.00){\line(1,0){200.00}}
\put(50.00,140.00){\line(1,0){200.00}}
\put(210.00,180.00){\line(0,-1){180.00}}
\put(130.00,180.00){\line(0,-1){180.00}}
\put(90.00,180.00){\line(1,0){160.00}}
\put(170.00,200.00){\line(0,-1){200.00}}
\put(90.00,160.00){\line(0,-1){160.00}}
\thicklines
\put(20.00,120.00){\line(1,0){30.00}}
\put(20.00,80.00){\line(1,0){30.00}}
\put(20.00,40.00){\line(1,0){30.00}}
\put(20.00,0.00){\line(1,0){30.00}}
\put(50.00,160.00){\line(-1,0){30.00}}
\put(250.00,170.00){\line(0,-1){10.00}}
\put(250.00,200.00){\line(0,-1){30.00}}
\put(90.00,200.00){\line(1,0){160.00}}
\put(90.00,160.00){\line(0,1){40.00}}
\put(50.00,40.00){\line(1,0){200.00}}
\put(50.00,80.00){\line(1,0){200.00}}
\put(50.00,120.00){\line(1,0){200.00}}
\put(50.00,0.00){\line(0,1){160.00}}
\put(240.00,0.00){\line(-1,0){190.00}}
\put(250.00,0.00){\line(-1,0){10.00}}
\put(250.00,160.00){\line(0,-1){160.00}}
\put(50.00,160.00){\line(1,0){200.00}}
\put(0.00,120.00){\line(1,0){20.00}}
\put(0.00,80.00){\line(1,0){20.00}}
\put(0.00,40.00){\line(1,0){20.00}}
\put(0.00,0.00){\line(1,0){20.00}}
\put(0.00,160.00){\line(0,-1){160.00}}
\put(20.00,160.00){\line(-1,0){20.00}}
\end{picture}}

\end{center}
\caption{\label{0}
 Koszulness of the operads $\tAss^n_d$,
$\pAss^n_d$, $\ttildeAss^n_d$ and $\ptildeAss^n_d$.
``Yes'' means that the corresponding operad is Koszul,
``no'' that it is not Koszul.}
\end{figure}
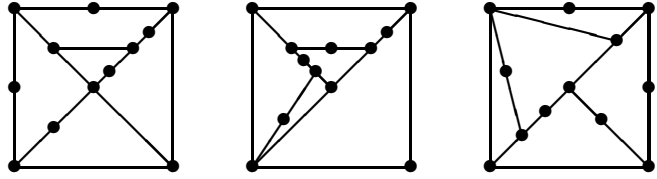

\begin{theorem}
\label{2}
Let $n \leq 7$. Then 
the operad $\tAss^n_d$ is Koszul if and only if $d$ is even.  The
operad $\pAss^n_d$ is Koszul if and only if $n$ and $d$ have the same
parity. The operad $\ttildeAss^n_d$ is Koszul if and only if $n$ and $d$ have
different parities.
The operad $\ptildeAss^n_d$ is Koszul if and only if $d$ is
odd. 

The operads  $\tAss^n_d$ with $d$ even, $\pAss^n_d$ with $n$ and $d$
of the same parity, $\ttildeAss^n_d$ with  $n$ and $d$ of different
parities, and  $\ptildeAss^n_d$ with $d$ odd, are Koszul for all $n \geq 2$.
\end{theorem}
  
The Koszulness part (``yes'' in the table of Figure~\ref{0}) will
follow from~\cite{hoffbeck} and relations between the operads
$\tAss^n_d$, $\pAss^n_d$, $\ttildeAss^n_d$ and $\ptildeAss^n_d$, see
Proposition~\ref{a}. The non-Koszulness part (``no'' in
Figure~\ref{0}) will, for $n \leq 7$, follow in a similar fashion from
Proposition~\ref{5}. We do not know how to extend our proof of
Proposition~\ref{5} for $n \geq 8$, we therefore put question marks
to the corresponding places in Figure~\ref{0}. See also Remark~\ref{JARuska}
and the first problem of Section~\ref{open}.  

In particular, the operads $\tildeAss$ and $\tAss^2_1 = \pAss^2_1$
are not Koszul. Let us formulate useful

\begin{lemma}
\label{o}
Let $\calP^n_d$ be one of the operads above. Then $\calP^n_d$ is
Koszul if and only if $\calP^n_{d+2}$ is Koszul, that is, only the
parity of $d$ matters.
\end{lemma}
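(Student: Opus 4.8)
The plan is to exploit the operadic (de)suspension, under which Koszulity is manifestly invariant. First I would recall from the excerpt the definitions $\ttildeAss^n_d := \osusp \tAss^n_{d-n+1}$ and $\ptildeAss^n_d := \osusp^{-1}\pAss^n_{d+n-1}$, and the standard fact that an operad $\calP$ is Koszul if and only if $\osusp\calP$ is Koszul: the functor $\osusp$ merely tensors each arity-$a$ component with $\mbox{sgn}_a\otimes{\uparrow}^{a-1}$ and is an exact equivalence commuting with the free operad functor, the bar/cobar constructions, and the formation of the quadratic dual up to the same twist, so it sends $\Dbar(\calP^!)\to\calP$ to the analogous map for $\osusp\calP$ and preserves whether it is a homology isomorphism. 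Hence Koszulity of $\tAss^n_d$, $\pAss^n_d$, $\ttildeAss^n_d$, $\ptildeAss^n_d$ depends only on the internal degree parameter in a way that is controlled by these shift formulas.

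The key step is then a purely numerical bookkeeping: compare $\calP^n_d$ with $\calP^n_{d+2}$ for each of the four families. For $\tAss$ and $\pAss$ there is nothing subtle once one knows that changing the degree of the single generating operation by an \emph{even} amount does not affect Koszulity; this itself follows from the interplay with $\osusp^2$, which shifts degree by $2(a-1)$ on arity $a$ — in particular by $2$ on binary operations and, more to the point, acts on the whole operad as an exact self-equivalence, so $\tAss^n_{d}$ and $\tAss^n_{d+2}$ (respectively $\pAss^n_d$ and $\pAss^n_{d+2}$) have equivalent bar constructions and the comparison maps correspond. Alternatively, and perhaps more cleanly, one observes that the defining relations $R_{\sstAss^n_d}$ and the partially associative relation are literally the \emph{same} subspaces of $\Gamma(\mu)(2n-1)$ regardless of the degree assigned to $\mu$ (the signs $(-1)^{(i+1)(n-1)}$ do not involve $d$), so the associated graded / Gröbner-type combinatorics computing the Koszul homology only sees $d\bmod 2$ through the Koszul sign rule; changing $d$ by $2$ changes no sign. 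For $\ttildeAss$ and $\ptildeAss$ one simply transports the statement along $\osusp^{\pm1}$: $\ttildeAss^n_{d+2}=\osusp\tAss^n_{d+2-n+1}=\osusp\tAss^n_{(d-n+1)+2}$, which is Koszul iff $\tAss^n_{d-n+1}$ is, iff $\ttildeAss^n_d$ is; and symmetrically for $\ptildeAss$.

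So the steps in order are: (1) state and justify the invariance of Koszulity under $\osusp$ and $\osusp^{-1}$, citing the definition of the cobar construction and the suspension conventions from Section~2 of the excerpt; (2) show the core fact for $\tAss^n_d$ and $\pAss^n_d$ that shifting $d$ by an even integer preserves Koszulity, via $\osusp^2$ being an exact equivalence (or via the sign-insensitivity of the quadratic relations under an even degree shift); (3) deduce the $\ttildeAss$ and $\ptildeAss$ cases by the shift formulas $\ttildeAss^n_d=\osusp\tAss^n_{d-n+1}$ and $\ptildeAss^n_d=\osusp^{-1}\pAss^n_{d+n-1}$, noting $d\mapsto d+2$ corresponds to an even shift on the inner parameter. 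The only real obstacle is being careful with the Koszul sign rule in step~(2): one must check that the isomorphism $\osusp^2\calP\cong\calP$ up to regrading is genuinely an isomorphism of dg-operads (the double signum representation is trivial, and ${\uparrow}^{2(a-1)}$ is an even shift carrying no sign), so that it commutes with the quadratic duality and the map $\Dbar(\calP^!)\to\calP$; granting this, the lemma is immediate for all four families.
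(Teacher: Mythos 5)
Your ``alternative'' argument --- that the assignment sending the degree-$d$ generator to the degree-$(d+2)$ generator extends to an isomorphism of free operads because the Koszul sign rule only sees parity, and that this isomorphism carries the relations to the relations and hence induces compatible isomorphisms of quadratic duals and bar constructions --- is exactly the paper's proof. The paper packages it as a ``twisted isomorphism'' $\varphi\colon\calP^n_d\to\calP^n_{d+2}$ whose arity-$(k(n-1)+1)$ component has degree $2k$, and applies it uniformly to all four families at once; your step (3), reducing $\ttildeAss$ and $\ptildeAss$ to $\tAss$ and $\pAss$ via $\osusp^{\pm1}$, is therefore not needed, though it is harmless given that $\osusp$ preserves Koszulity.

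Your primary route via $\osusp^2$, however, does not work for $n\ge 3$. The double operadic suspension shifts the arity-$a$ component by $2(a-1)$, so it shifts the degree of an arity-$n$ generator by $2(n-1)$, not by $2$: one gets $\osusp^2\tAss^n_d\cong\tAss^n_{d+2(n-1)}$, which for $n\ge 3$ only proves that Koszulity is periodic in $d$ with period $2(n-1)$ --- strictly weaker than the period-$2$ claim of the lemma. The phrase ``acts on the whole operad as an exact self-equivalence'' conflates an auto-equivalence of the category of operads with an identification of $\tAss^n_{d}$ with $\tAss^n_{d+2}$; no such identification is produced by $\osusp^2$ unless $n=2$. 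So you should promote your parenthetical sign-insensitivity argument to the main proof, and the one genuine point to verify there --- which the paper leaves as an exercise --- is that the degree-shifting map on generators extends to the whole free operad compatibly with all the $\circ_i$-operations precisely when the shift is even, since the associativity relations among the $\circ_i$ carry signs depending on the parities of the degrees involved; it is this extension problem, not the identification of the relation subspaces of $\Gamma(\mu)(2n-1)$, that forces the evenness hypothesis.
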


\begin{proof}
There is a `twisted'
isomorphism
\begin{equation}
\label{9}
\varphi : \calP^n_d \stackrel\cong\longrightarrow  \calP^n_{d+2},
\end{equation}
i.e.~a sequence of equivariant isomorphisms $\varphi(a): 
\calP^n_d(a)\to \calP^n_{d+2}(a)$, $a \geq 1$, that commute with the
$\circ_i$-operations such that the component 
$\varphi(k(n-1)+1)$ is of degree
$2k$, for $k \geq 0$. 

To construct such an isomorphism, consider an operation $\mu'$ of
arity $n$ and degree $d$, and another operation $\mu''$ of the same arity but
of degree $d + i$. We leave as an exercise to verify that
the assignment $\mu' \mapsto \mu''$ extends to a twisted isomorphism
$\omega: \Gamma(\mu') \to \Gamma(\mu'')$ if and only if $i$ is even. 

Let $\calP^n_d = \Gamma(\mu')/(R')$ and  $\calP^n_{d+2} =
\Gamma(\mu'')/(R'')$. It is clear that the twisted isomorphism $\omega :
\Gamma(\mu') \to \Gamma(\mu'')$ preserves the ideals of relations, so
it induces a twisted isomorphism~(\ref{9}). A moment's reflection
convinces one that $\varphi$ induces similar twisted
isomorphisms of the Koszul duals and the bar constructions. 
This, by Definition~\ref{Zitra_budu_s_Jaruskou!}, gives the lemma.  
\end{proof}

\begin{proposition}
\label{a}
The operads marked ``yes'' in the tables of Figure~\ref{0} are Koszul.
\end{proposition}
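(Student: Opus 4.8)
The plan is to reduce every ``yes'' entry of Figure~\ref{0} to the single fact that $\tAss^n_0$ is Koszul, and then to invoke that fact from~\cite{hoffbeck}. Two formal ingredients drive the reduction. First, a quadratic operad $\calP$ is Koszul if and only if $\calP^!$ is: this is immediate from $(\calP^!)^!=\calP$ and the self-dual nature of the condition in Definition~\ref{Zitra_budu_s_Jaruskou!}, cf.~\cite{ginzburg-kapranov:DMJ94}. Second, operadic (de)suspension is compatible with quadratic duality, $(\osusp\calP)^!=\osusp^{-1}\calP^!$, and with the dual bar construction, so that $\calP$ is Koszul exactly when $\osusp^{\pm1}\calP$ is; see~\cite{markl-shnider-stasheff:book}. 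Combined with Lemma~\ref{o} (only the parity of $d$ matters) and Proposition~\ref{JarKA}, these let one pass freely among the four families.

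Carrying this out: for $\tAss^n_d$ with $d$ even, Lemma~\ref{o} gives the same Koszulity as $\tAss^n_0$. For $\pAss^n_d$ with $d$ and $n$ of the same parity, Lemma~\ref{o} reduces to $\pAss^n_{n-2}$, which by Proposition~\ref{JarKA} equals $(\tAss^n_0)^!$ and hence is Koszul iff $\tAss^n_0$ is. For $\ttildeAss^n_d=\osusp\tAss^n_{d-n+1}$ in its ``yes'' region ($d$ and $n$ of opposite parity, so $d-n+1$ is even), suspension-invariance together with Lemma~\ref{o} reduces to $\tAss^n_0$. For $\ptildeAss^n_d=\osusp^{-1}\pAss^n_{d+n-1}$ in its ``yes'' region ($d$ odd, so $d+n-1\equiv n$), it reduces to the $\pAss^n$ case just treated, hence again to $\tAss^n_0$. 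In each box the parity bookkeeping matches Figure~\ref{0} exactly, so the whole proposition follows once $\tAss^n_0$ is known to be Koszul.

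That remaining base case is where the real work lies, and it is supplied by~\cite{hoffbeck}: the totally associative $n$-ary operad --- equivalently, via Proposition~\ref{JarKA}, its Koszul dual $\pAss^n_{n-2}$, whose algebras are the meager $A_\infty$-algebras --- admits a quadratic Poincar\'e--Birkhoff--Witt basis, and Hoffbeck's PBW criterion then yields Koszulity. If a self-contained derivation is wanted, I would instead verify the diamond condition by hand: the free $\tAss^n_0$-algebra on $V$ is $\bigoplus_{k\ge0}V^{\otimes(k(n-1)+1)}$ with composition given by substitution, so on the evident monomial basis the relations $\mu\circ_i\mu-\mu\circ_{i+1}\mu$ form a rewriting system whose only critical pairs occur in arity $3n-2$ (three instances of $\mu$) and are confluent there, which is precisely the condition forcing Koszulity. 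Verifying this confluence --- equivalently, checking Hoffbeck's hypotheses --- is the main and essentially the only obstacle; everything else is the routine transport through Koszul duality, $\osusp^{\pm1}$ and Lemma~\ref{o}.
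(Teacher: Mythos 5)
Your proposal is correct and follows essentially the same route as the paper: reduce everything to the Koszulity of $\tAss^n_0$ (cited from~\cite{hoffbeck}) via Lemma~\ref{o}, the duality relations of Proposition~\ref{JarKA} together with the fact that Koszulity is preserved under $!$, and the invariance of Koszulity under operadic (de)suspension, with the same parity bookkeeping. The only difference is your optional sketch of a hands-on PBW/confluence verification of the base case, which the paper does not include.
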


\begin{proof}
The operads $ \tAss^n_0$ are Koszul for all $n \geq 2$
by~\cite[\S~7.2]{hoffbeck} 
(see also~\cite{gnedbaye:CM96} for the case $n$ even and $d=0$). 
So, by Lemma~\ref{o}, the operads 
$ \tAss^n_d$ are Koszul for all even $d$ and $n \geq 2$, which gives the
four ``yes'' in the first row of the table in Figure~\ref{0}.

The ``yes'' in the 3rd row follow from the ``yes'' in the
1st row, the fact that an operad is Koszul if and only if its
dual operad is Koszul proved in Proposition~\ref{sec:dual-quadr-oper},
and the isomorphism $(\pAss^n_d)^!= \tAss^n_{-d+n-2}$ established in
Proposition~\ref{JarKA}.  The ``yes'' in the remaining rows in
Figure~\ref{0} follow from the ``yes'' in the 1st and the 3rd rows, and 
Proposition~\ref{Jaruska_sbira_houbicky} by which
the suspension preserves Koszulness.
\end{proof}

The ``no'' entries in Figure~\ref{0} will be established using the
Ginzburg-Kapranov criterion~(\ref{zitra_Jarka}).
Our first task will therefore be to describe the Poincar\'e series of
the family $\tAss^n_d$ which generates, via the duality and suspension, all
the remaining operads.

\begin{lemma}
\label{l}
The generating function for the operad $\tAss_d^n$ is 
\begin{equation}
\label{Zitra_ma_priletet_Jarka_do_Mulhouse}
g_{\sstAss_d^n}(t) := 
\cases{\displaystyle\frac t{1-t^{n-1}}}{if $d$  is even, and}
{t-t^{n}+t^{2n-1}}{if $d$ is  odd.}
\end{equation}
\end{lemma}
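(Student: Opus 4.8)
The plan is to compute the dimensions of the graded components of $\tAss^n_d$ directly from a normal form for its elements, and then assemble the Poincar\'e series from these dimensions. The key structural fact is that in $\tAss^n_d$ every iterated composite of $\mu$'s can be rewritten, using the defining relations $\mu\circ_i\mu=\mu\circ_j\mu$, into a \emph{left-normed} form in which each successive $\mu$ is plugged into the leftmost input slot. An element of arity $a$ is a nonzero composite of $\mu$'s only when $a\equiv 1\pmod{n-1}$, say $a=k(n-1)+1$, and it is then built from exactly $k$ copies of $\mu$; hence the underlying (planar, unlabelled) tree is unique up to the relations, so $\tAss^n_d(a)$ is spanned by the $a!$ permutations of the leaves of this single normal-form tree. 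The first step is therefore to argue that $\tAss^n_d(k(n-1)+1)$ is, as a $\Sigma_a$-module, a single copy of the regular representation $\bfk[\Sigma_a]$ placed in a single homological degree, and is zero in all other arities (and that $\tAss^n_d(1)=\bfk$ in degree $0$). This is the main obstacle: one must check that the relations do not force any further collapse, i.e.\ that the normal form genuinely has dimension $a!$ and not less. The cleanest way is to exhibit an explicit action of $\tAss^n_d$ on a suitable free object (a free totally associative $n$-ary algebra on a graded vector space), or to invoke a Gr\"obner/rewriting argument showing the left-normed monomials form a basis; alternatively one can cite that $\tAss^n_0$ is Koszul (Proposition~\ref{a}) and compare dimensions, but since Lemma~\ref{l} is used en route to Theorem~\ref{2} it is better to give the elementary rewriting argument.

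Once the dimension count is in hand, the second step is to track the homological degree. With $\mu$ in degree $d$, a normal-form composite of $k$ copies of $\mu$ sits in degree $kd$, so $\chi(\tAss^n_d(k(n-1)+1)) = (-1)^{kd}\,(k(n-1)+1)!$ while all other Euler characteristics vanish (except $\chi(\tAss^n_d(1))=1$, the $k=0$ case). Substituting into the definition
\[
g_{\sstAss^n_d}(t)=\sum_{a\ge1}\frac{1}{a!}\,\chi(\tAss^n_d(a))\,t^a
\]
kills all terms except those with $a=k(n-1)+1$, $k\ge0$, and the factorials cancel, leaving
\[
g_{\sstAss^n_d}(t)=\sum_{k\ge0}(-1)^{kd}\,t^{\,k(n-1)+1}
= t\sum_{k\ge0}\bigl((-1)^d\,t^{\,n-1}\bigr)^{k}.
\]

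The third step is the elementary evaluation of this geometric series according to the parity of $d$. If $d$ is even, $(-1)^d=1$ and the series is $t/(1-t^{n-1})$, the first case of~(\ref{Zitra_ma_priletet_Jarka_do_Mulhouse}). If $d$ is odd, $(-1)^d=-1$ and the sum becomes $t\sum_{k\ge0}(-1)^k t^{k(n-1)} = t/(1+t^{n-1})$; here one must note that the claimed closed form $t - t^n + t^{2n-1}$ is not equal to $t/(1+t^{n-1})$ as a power series. This signals that for odd $d$ the operad is \emph{not} concentrated in the naive left-normed normal form: the relations in arity $2n-1$ (among the $k=2$ composites) do produce genuine cancellation because $\mu\circ_i\mu \pm \mu\circ_j\mu$ acquires signs from the Koszul rule when $d$ is odd, so that $\tAss^n_d(2n-1)$ collapses to the part predicted by Koszulity of the \emph{quadratic} relations, and all higher $k$ vanish. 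Thus for odd $d$ one should instead compute the dimensions directly in low arities: $\tAss^n_d(1)=\bfk$, $\tAss^n_d(n)=\bfk[\Sigma_n]$, and in arity $2n-1$ the relations $\mu\circ_i\mu = -\mu\circ_j\mu$ (signs from degree $d$ odd) cut the span of the $n$ generators $\mu\circ_i\mu$ down by a factor reflected in the coefficient $-1$ of $t^n$ after antisymmetrization, while arity $\ge 3n-2$ is killed entirely — yielding Euler characteristics summing to $t - t^n + t^{2n-1}$. I would therefore split the proof into the two parity cases from the outset: for $d$ even, the left-normed basis argument plus the geometric series; for $d$ odd, an explicit finite computation of $\chi(\tAss^n_d(a))$ for $a=1,n,2n-1$ and a vanishing argument for larger $a$, the vanishing being the point that needs the sign-sensitive form of the relations and is the real content of the odd case.
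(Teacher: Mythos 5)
Your even-$d$ argument matches the paper's: rewrite every composite into the left-normed form $(\cdots((\mu\circ_1\mu)\circ_1\mu)\cdots)\circ_1\mu$, count $(k(n-1)+1)!$ permuted copies in arity $k(n-1)+1$, and sum the geometric series. You also correctly detect that the naive count would give $t/(1+t^{n-1})$ for odd $d$, which disagrees with the claimed $t-t^n+t^{2n-1}$ from arity $3n-2$ onward, so the components with $k\geq 3$ must vanish. But that vanishing is precisely the content of the odd case, and you only assert it (``arity $\ge 3n-2$ is killed entirely'') without proving it; your proposal is a plan for the odd case, not a proof. The paper's argument is a concrete computation: the defining relation gives $(\mu\circ_1\mu)\circ_1\mu=(\mu\circ_n\mu)\circ_1\mu$, while a chain of applications of the relation and of operadic associativity gives $(\mu\circ_1\mu)\circ_1\mu=(\mu\circ_1\mu)\circ_{2n-1}\mu$, and the Koszul sign rule for interchanging the two disjoint inner copies of the odd-degree $\mu$ yields $(\mu\circ_1\mu)\circ_{2n-1}\mu=-(\mu\circ_n\mu)\circ_1\mu$; hence $(\mu\circ_1\mu)\circ_1\mu=0$, so $\tAss^n_d(3n-2)=0$ and all higher components, being generated by it, vanish too.

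A second, related problem is that you misattribute the source of the signs. You write that for odd $d$ the relations become $\mu\circ_i\mu=-\mu\circ_j\mu$ ``after antisymmetrization'' and that this cuts down arity $2n-1$. The defining relations of $\tAss^n_d$ are $\mu\circ_i\mu-\mu\circ_j\mu$ with no signs, independently of $d$; arity $2n-1$ has full dimension $(2n-1)!$ in both parities (the relation just identifies the $n$ two-vertex trees), and the coefficient $-1$ of $t^n$ comes simply from $\chi(\tAss^n_d(n))=(-1)^d\,n!$. The parity of $d$ enters only through the Koszul signs in the associativity of the $\circ_i$-operations for composites of three or more $\mu$'s, which is exactly where the collapse happens. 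So the gap is not only that the vanishing argument is missing, but that the mechanism you point to for producing it is the wrong one.
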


\begin{proof}
The components of the operad $\tAss_d^n$ are trivial in arities
different from $k(n-1) +1$, $k \geq 0$. The piece $\tAss_d^n(k(n-1)
+1)$ is generated by all possible $\circ_i$-compositions involving $k$
instances of the generating operation $\mu$, modulo the relations
\begin{equation}
\label{,}
\mu \circ_i\mu- \mu \circ_j\mu, 
\ {\rm for} \ i,j=1,\ldots ,n
\end{equation}
which enable one to replace each $\mu \circ_i \mu$, $2 \leq i \leq n$,
by $\mu \circ_1 \circ \mu$.

If the degree $d$ is {\em even\/}, the operad $\tAss_d^n$ is evenly graded,
so the associativity~\cite[p.~1473, Eqn.~(1)]{markl:zebrulka} of the
$\circ_i$-operations does not involve signs. Therefore an arbitrary
$\circ_i$-composition of $k$ instances of $\mu$ can be brought to the
form
\[
\eta_k :=(\cdots((\mu \circ_1 \mu) \circ_1 \mu) \circ_1  \cdots ) \circ_1 \mu.
\]
We see that  $\tAss_d^n(k(n-1) +1)$ is spanned by the set 
$\{\eta_k \circ \sigma;\ \sigma \in \Sigma_{k(n-1) +1}\}$, so 
\[
\dim(\tAss_d^n(k(n-1) +1)) =
(k(n-1) +1)!
\] 
and, by definition,
\[
g_{\sstAss_d^n}(t) = \sum_{k \geq 0}  t^{k(n-1)+1}= 
\displaystyle\frac t{1-t^{n-1}},
\] 
which verifies the even case of~(\ref{Zitra_ma_priletet_Jarka_do_Mulhouse}).

The {\em odd\/} case is subtler since the
associativity~\cite[p.~1473, Eqn.~(1)]{markl:zebrulka} may involve nontrivial
signs. As in the even case we calculate that 
\begin{equation}
\label{C}
\dim(\tAss_d^n(k(n-1) +1)) =
(k(n-1) +1)!\ \mbox { for } k = 0,1,2,
\end{equation} 
because these small arities do not require the associativity.

If $k \geq 3$, we can still to bring 
each $\circ_i$-composition of $k$
instances of $\mu$ to the form of the `canonical' generator 
$\eta_k$, but we may get a
nontrivial sign which may moreover depend on the way we applied the
associativity. Relation~(\ref{,}) implies that
\begin{equation}
\label{A}
(\mu \circ_1 \mu) \circ_1 \mu = (\mu \circ_n \mu)\circ_1 \mu
\end{equation}
in  $\tAss_d^n(3n-2)$.
Applying~(\ref{,}) and the
associativity~\cite[p.~1473, Eqn.~(1)]{markl:zebrulka} several times,
we get that
\begin{align}\nonumber 
(\mu \circ_1 \mu) \circ_1 \mu & = \mu \circ_1( \mu  \circ_1 \mu) =
\mu \circ_1 (\mu  \circ_n \mu) = (\mu \circ_1 \mu) \circ_n \mu
=  (\mu \circ_n \mu) \circ_n \mu
\\
\label{B}
& = \mu \circ_n( \mu  \circ_1 \mu) =
 \mu \circ_n( \mu  \circ_n \mu) = (\mu \circ_n \mu) \circ_{2n-1} \mu
\\ \nonumber 
& = (\mu \circ_1 \mu) \circ_{2n-1} \mu.
\end{align}
Since the degree of $\mu$ is odd, the first line of the
associativity~\cite[p.~1473, Eqn.~(1)]{markl:zebrulka} implies
\[
(\mu \circ_1 \mu) \circ_{2n-1} \mu = -  (\mu \circ_n \mu) \circ_1 \mu
\]
therefore~(\ref{A}) and~(\ref{B}) combine into
\[
(\mu \circ_1 \mu) \circ_1 \mu  =- (\mu \circ_1 \mu) \circ_1 \mu.
\]
This means that $(\mu \circ_1 \mu) \circ_1 \mu = 0$ so
$\tAss_d^n(3n-2)= 0$. Since $\tAss_d^n(k(n-1) +1)$ is, for $k \geq 3$,
generated by $\tAss_d^n(3n-2)$, we conclude that $\tAss_d^n(k(n-1)
+1) = 0$ for $k \geq 3$ which, along with~(\ref{C}), verifies the odd 
case of~(\ref{Zitra_ma_priletet_Jarka_do_Mulhouse}).
\end{proof}

\begin{remark}
{\rm
The Poincar\'e series of an operad $\calP$ and its suspension
$\osusp \calP$ are related by
$g_{{\ssosusp}\calP}(t)=-g_\calP(-t)$. Lemma~\ref{l} thus implies that
the generating series of the operad $\ttildeAss^n_d =
\osusp\tAss^n_{d-n+1}$ 
equals 
\[
g_{\ssttildeAss^n_d}(t) := 
\cases
{t + (-1)^d t^n + t^{2n-1}}{if $n$ and $d$ have the same parity, and}
{\rule{0pt}{1.8em}\displaystyle\frac t{1-(-1)^d t^{n-1}}}
{if $n$ and $d$ have different parities.}
\]
We do not know explicit formulas for the Poincar\'e series of
$\pAss^n_d$ and $\ptildeAss^n_d$ 
except in the case $n =2$ when these operads coincide with
the corresponding (anti)-associative operads.
}\end{remark}

\begin{example}
\label{Pozitri_uvidim_Jarusku.}
{\rm
It easily follows from the above calculations that, for the
anti-associative operad $\tildeAss$, one has
\[
\tildeAss(1) \cong \bfk,\
\tildeAss(2) \cong \bfk[\Sigma_2] \mbox { and }
\tildeAss(3) \cong \bfk[\Sigma_3],
\]
while $\tildeAss(a) = 0$ for $a \geq 4$.
}
\end{example}

Let us return to our task of proving the non-Koszulness of the ``no''
cases in the tables of Figure~\ref{0}.
Our strategy  will be to interpret~(\ref{zitra_Jarka}) as saying
that $-g_{\calP^!}(-t)$ is a formal inverse of $g_{\calP}(t)$ at $0$.
Since $g'_{\calP}(0) = 1$, this unique formal inverse
exists.  In the particular case of $\calP = \tAss^n_d$, with $d$ odd, 
this means that
$-g_{\sspAss^n_{-d+n-2}}(-t)$ should be compared to a formal inverse of
$g_{\sstAss^n_d}(t) = t - t^n + t^{2n-1}$.  A simple degree count shows
that $g_{\sspAss^n_{-d+n-2}}(t)$ is of the form
\[
\cases{t - A_1t^n + A_2 t^{2n-1} - A_3 t^{3n-2} + \cdots}{for $n$ even
  and}
  {\rule{0pt}{1em}t + A_1t^n + A_2 t^{2n-1} + A_3 t^{3n-2} + \cdots}
{for $n$ odd,}
\]
for some {\em non-negative\/} integers $A_1,A_2,A_3,\ldots$, therefore 
$-g_{\sspAss^n_{-d+n-2}}(-t)$ is in both cases the formal power series
\begin{equation}
\label{v}
t + A_1t^n + A_2 t^{2n-1} + A_3 t^{3n-2} + \cdots
\end{equation}
with non-negative coefficients. If we show that the formal inverse of
$t - t^n + t^{2n-1}$ is not of this form, by
Theorem~\ref{gk} the corresponding operad $\tAss_d^n$ is not Koszul.

\begin{example}{\rm
The Poincar\'e series of the operad
$\tAss^2_1$ is, by Lemma~\ref{l},
\[
g_{\sstAss_1^2}(t) =  t-t^2+t^3.
\]
One can compute the formal inverse of this function as 
\[
t+t^2+t^3-4t^5-14t^6-30t^7-33t^8+55t^9+ \cdots.
\]
The presence of negative coefficients implies that the operad
$\tAss^2_1$ is not Koszul, neither is the anti-associative operad 
$\tildeAss = \ttildeAss^2_0 = \osusp^{-1} \tAss^2_1$.

Likewise, the Poincar\'e series of the operad
$\tAss^3_1$ equals
\[
g_{\sstAss_1^3}(t) =  t - t^3+t^5
\]
and we computed, using {\tt Matematica}, the initial part of the
formal inverse as
\[
t+t^3+2t^5+4t^7+ 5t^9-13t^{11}-147t^{13}+\cdots
\]
The existence of negative coefficients again implies that the operad
$\tAss^3_1$ is not Koszul.  The formal inverse of
\[
g_{\sstAss_1^4}(t) =  t - t^4+t^7
\]
up to the first negative term is 
\[
t+t^4+3t^7+11t^{10}+ 42t^{13}+153t^{16}+469t^{19}+690t^{22}-5967t^{25}+\cdots
\]
so $\tAss^4_1$ is not Koszul.
}\end{example}

The complexity of the calculation of the relevant initial part of the
inverse of $g_{\sstAss^n_1}(t) = t-t^{n}+t^{2n-1}$ 
grows rapidly with $n$. We have, however, the following:

\begin{proposition}
\label{5}
For $n \leq 7$, the formal inverse of $t-t^{n}+t^{2n-1}$ has at least
one negative coefficient. Therefore the operads $\tAss^n_d$ for $d$
odd and $n\leq 7$ are not Koszul. 
\end{proposition}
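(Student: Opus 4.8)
Most of the reduction to a statement about formal power series is already carried out in the paragraphs preceding the proposition: by Theorem~\ref{gk} together with Proposition~\ref{JarKA}, if $\tAss^n_d$ with $d$ odd were Koszul, then $-g_{\sspAss^n_{-d+n-2}}(-t)$ would be the formal inverse of $g_{\sstAss^n_d}(t)=t-t^n+t^{2n-1}$ --- a series which, by Lemma~\ref{l}, does not depend on the particular odd $d$ --- while the degree count producing~(\ref{v}) shows that $-g_{\sspAss^n_{-d+n-2}}(-t)$ has only non-negative coefficients. So the plan is to show that, for each $n\in\{2,\dots,7\}$, the formal inverse of $t-t^n+t^{2n-1}$ has at least one negative coefficient: the examples above settle $n\le4$, leaving only $n=5,6,7$. (Lemma~\ref{o} additionally lets one fix a single odd $d$ per $n$.)

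To read off individual coefficients of the inverse without first computing all the earlier ones, I would use Lagrange inversion. Since $f(t):=t-t^n+t^{2n-1}=t\,(1-t^{n-1}+t^{2n-2})$ and $1-t^{n-1}+t^{2n-2}=\dfrac{1+t^{3(n-1)}}{1+t^{n-1}}$, only exponents congruent to $1$ modulo $n-1$ occur in $f^{-1}$; writing $f^{-1}(t)=\sum_{k\ge0}A_k\,t^{k(n-1)+1}$, setting $u=t^{n-1}$ and $m=k(n-1)+1$, Lagrange inversion gives
\[
A_k=\frac1m\,[u^k]\Bigl(\frac{1+u}{1+u^3}\Bigr)^{m}
   =\frac1m\sum_{0\le j\le k/3}(-1)^j\binom{m+j-1}{j}\binom{m}{k-3j}\,,
\]
an explicit finite alternating sum of products of binomial coefficients. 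For instance $(n,k)=(2,4)$ gives $A_4=\tfrac15\bigl(\binom{5}{4}-5\cdot5\bigr)=-4$, $(n,k)=(3,5)$ gives $A_5=\tfrac1{11}(462-605)=-13$, and $(n,k)=(4,8)$ gives $A_8=\tfrac1{25}(1081575-1328250+97500)=-5967$, matching the examples. The plan is then, for each of $n=5,6,7$, to evaluate $A_1,A_2,A_3,\dots$ from this formula until a negative value appears, and to conclude via Theorem~\ref{gk} exactly as above.

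The step I expect to be the real obstacle is that nothing bounds in advance how far out the first negative coefficient lies: it occurs at $k=4$ for $n=2$, at $k=5$ for $n=3$, at $k=8$ for $n=4$, and both this index and the magnitudes of the coefficients grow quickly with $n$ --- which is precisely why the method halts at $n=7$ and why the ``no'' part of Theorem~\ref{2} is only conjectural for $n\ge8$. So the proof is unavoidably a finite computation, best carried out with a computer algebra system: for each of $n=5,6,7$ one pushes the evaluation of the $A_k$ far enough (the closed formula above reducing each step to a routine sum of binomials) to reach a negative coefficient. A more uniform alternative would be to single out one index $k=k(n)$ --- plausibly $k$ slightly above a multiple of $3$, so that the $j\ge1$ terms in the displayed sum can outweigh the leading term $\binom{m}{k}$ --- and prove $A_{k}<0$ by a single estimate valid for all $n\le7$ at once; but since already $n=4$ forces $k=8$, such an estimate looks delicate, and I expect it to offer at most a cosmetic improvement over the direct computation.
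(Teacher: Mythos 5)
Your reduction to a statement about the formal inverse of $t-t^n+t^{2n-1}$ is exactly the one the paper makes in the text preceding the proposition, and your Lagrange-inversion formula
\[
A_k=\frac1m\sum_{0\le j\le k/3}(-1)^j\binom{m+j-1}{j}\binom{m}{k-3j},\qquad m=k(n-1)+1,
\]
is correct (your three sample evaluations agree with the paper's examples). But for $n=5,6,7$ what you offer is a recipe, not a proof: no negative coefficient is exhibited, and — as you yourself note — the method carries no a priori bound on where the first one occurs, so it cannot certify its own success in advance. By the paper's Remark~\ref{JARuska} the first negative coefficients occur at $t^{57}$, $t^{161}$ and $t^{1171}$ for $n=5,6,7$ (i.e.\ at $k=14$, $32$ and $195$), so the computation is feasible but far from small; and a search that fails to terminate proves nothing, which is precisely the situation at $n=8$, where no negative term of degree $<10\,000$ was found. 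As written, then, there is a genuine gap for $n=5,6,7$.

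The paper closes the case $n\le 7$ by a completely different, computation-free argument, and this is the idea your proposal is missing. Set $g(z)=z-z^n+z^{2n-1}$, an entire function, and let $\frakZ=\{z\in\bbbC:\ g'(z)=0\}$. The substitution $w=z^{n-1}$ turns $g'(z)=1-nz^{n-1}+(2n-1)z^{2n-2}=0$ into the quadratic $1-nw+(2n-1)w^2=0$, whose discriminant $n^2-8n+4$ is negative exactly for $n\le 7$; hence $\frakZ\cap\bbbR=\emptyset$. Now suppose the branch $f$ of $g^{-1}$ at $0$ (whose Taylor series is the formal inverse) had all coefficients non-negative. Since $\frakZ$ is nonempty and does not contain $0$, the radius of convergence $r$ of $f$ satisfies $0<r<\infty$, and a Pringsheim-type estimate, $|f({\mathfrak z})|\le f(|{\mathfrak z}|)=f(r)$ for ${\mathfrak z}\in\frakZ$ with $|{\mathfrak z}|=r$, forces the singularity of $f$ on its circle of convergence to lie at the \emph{real} point $r$, hence forces $g'$ to vanish at a real point — contradicting $\frakZ\cap\bbbR=\emptyset$. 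This handles all $n\le 7$ uniformly and explains the threshold: for $n=8$ the quadratic acquires real roots and the argument (and, apparently, the conclusion) breaks down. To salvage your route you would have to either actually carry the computation out to $t^{1171}$ for $n=7$, or extract from your alternating binomial sum an estimate proving some specific $A_k<0$; the paper's analytic argument avoids both.
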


\begin{proof}
The function $g(z) := z-z^{n}+z^{2n-1}$ is analytic in the
complex plane $\bbbC$. Its analytic inverse $g^{-1}(z)$ is a
not-necessarily single-valued analytic function defined outside the
points in which the derivative $g'(z)$ vanishes. Let us denote by
$\frakZ$ the set of these points, i.e.
\[
\frakZ := \{z \in \bbbC;\ g'(z) = 0\}.
\]
The key observation is that, for $n \leq 7$, the equation $g'(z) = 0$
has no real solutions, $\frakZ \cap \bbbR = \emptyset$. Indeed, one
has to solve the equation
\begin{equation}
\label{Dnes_prileti_Jarka_do_Mulhouse!}
g'(z) = 1 - n z^{n-1} + (2n-1)z^{2n-2} = 0
\end{equation}
which, after the substitution $w := z^{n-1}$ leads to the quadratic
equation
\[
1 - nw + (2n-1)w^2 = 0
\]
whose discriminant $n^2 - 8n + 4$ is, for $n \leq 7$, negative.

Let $f(z)$ be the power series representing the 
branch at $0$ of $g^{-1}(z)$ such that $f(0) = 0$. It is clear that
$f(t)$ is precisely the formal inverse of $g(t)$ at $0$.
Suppose that 
\[
f(z) = z + a_2 z^2 + a_3 z^3 + a_4z^4 +\cdots,
\]
with all coefficients $a_2,a_3,a_4,\ldots$ non-negative real numbers.
Since $\frakZ \not= \emptyset$ and obviously $0 \not \in \frakZ$, 
the radius of convergence of $f(z)$ at $0$, which equals the radius of
the maximal circle centered at $0$ whose interior does not contain
points in $\frakZ$, is
some number $r$ with $0 < r < \infty$. Let ${\mathfrak z} \in \frakZ$
be such that $|{\mathfrak z}| = r$. Since all coefficients
of the power series $f$ are positive, we have
\[
|f({\mathfrak z})| \leq f(|{\mathfrak z}|) = f(r),
\] 
so the function $f(r)$ must have singularity at the {\em real\/} point
$r \in \bbbR$, i.e.~$g'(z)$ must vanish at~$r$. This contradicts the
fact that $g'(z) = 0$ has no real solutions.
\end{proof}

\begin{remark}
\label{JARuska}
{\rm
Equation~(\ref{Dnes_prileti_Jarka_do_Mulhouse!}) has, for $n=8$, two
real solutions, ${\mathfrak z}_1 = \sqrt[7]{1/3}$ and ${\mathfrak z}_2
= \sqrt[7]{1/5}$. This means that the inverse function of
$z-z^{n}+z^{2n-1}$ has two positive real poles and the arguments used
in our proof of Proposition~\ref{5} do not apply.

We verified Proposition~\ref{5} using {\tt Matematica}. The first
negative coefficient in the inverse of $t-t^{n}+t^{2n-1}$
was at the power $t^{57}$ for $n=5$, at $t^{161}$ for $n=6$, and
at $t^{1171}$ for $n=7$. 
For $n=8$ we did not  find any negative term
of degree less than $10~000$. It is indeed possible that all coefficients
of the inverse of $t-t^8+t^{15}$ are positive.
}
\end{remark}

Proposition~\ref{5} together with the fact that the suspension and the
!-dual preserves Koszulness (Propositions~\ref{Jaruska_sbira_houbicky}
and \ref{sec:dual-quadr-oper}) imply the ``no'' entries of the tables
in Figure~\ref{0} for $n \leq 7$.

\section{Cohomology of algebras over non-Koszul operads -- an example}
\label{sec:cohom-algebr-over}

In this section we study anti-associative algebras introduced in
Definition~\ref{Jar}, i.e.  structures $A = (V,\mu)$ with a degree-$0$
bilinear anti-associative multiplication \hbox{$\mu : \otexp V2 \to
V$}.  We describe the `standard' cohomology $\Hst*$ of an
anti-associative algebra $A$ with coefficients in itself and compare
it to the relevant part of the deformation cohomology $\H*$ based on
the minimal model of the anti-associative operad $\tildeAss$. Since
$\tildeAss$ is, by Theorem~\ref{2}, not Koszul, these two cohomologies
differ. While the standard cohomology has no sensible meaning, the
deformation cohomology coincides with the triple
cohomology~\cite{fox:JPAA93,fox-markl:ContM97} and governs
deformations of anti-associative algebras.

\begin{examples}
{\rm
Anti-associative algebras, as algebras over a non-Koszul operad, should
possess a lot of peculiar properties. Therefore, due to the `anthropic
principle,' one can hardly expect to find examples of these structures
in Nature. Observe, however, that there still are `natural' examples of the
anti-associativity. For instance, the standard basis elements
$\{\Rada e18\}$ of the octonions (also called the  Cayley algebra)
satisfy
\[
(e_ie_j)e_k=-e_i(e_je_k),
\]
whenever $e_i e_j \neq e_k$ and $1 \leq i,j,k\leq 8$ are  distinct.

Since $\tildeAss(a)=0$ for $a\geq 4$, the product of four elements in
an arbitrary anti-associative algebra is trivial. Anti-associative
algebras are therefore always 3-step nilpotent. Below we classify,  for $k \leq 3$,
isomorphism classes of anti-associative structures on the
$k$-dimensional vector space $V: = \Span(\Rada e1k)$.

\noindent 
{\em Case $k=1$.} The only $1$-dimensional anti-associative algebra is the
  trivial one, with $e_1 \cdot e_1=0$.

\noindent 
{\em Case $k=2$.} In dimension $2$, there are two non-isomorphic
anti-associative algebras: the trivial one, and the one defined by $e_1
\cdot e_1=e_2$ and the remaining products of the basis elements trivial.

\noindent 
{\em Case $k=3$.}
In dimension $3$, we distinguish two subclasses of anti-associative
algebras. Algebras in the first subclass satisfy $v \cdot v=0$ for all
$v \in V$. There are two non-isomorphic algebras in this subclass,
the trivial one, and the one with $e_1 \cdot e_2=-e_2 \cdot e_1=e_3$
and the remaining products of the basic elements trivial.

Algebras in the second subclass contain some $v$ with
$v \cdot v\neq 0$.  Algebras with this property are either 
isomorphic to the one given by:
$$\left\{
\begin{array}{l}
e_1 \cdot e_1=e_2,\\
e_1 \cdot e_2=-e_2 \cdot e_1=e_3 ,
\end{array}
\right.$$
which happens to be the free anti-associative algebra on one
generator,
or to an algebra belonging to one of the following  
two $2$-dimensional families:
\[
\left\{
\begin{array}{l}
e_1 \cdot e_1=e_2,\\
e_1 \cdot e_3=ae_2, \\
e_3 \cdot e_1=be_2 ,\\
e_3 \cdot e_3=e_2,\
\end{array}
\right.
\quad
\left\{
\begin{array}{l}
e_1 \cdot e_1=e_2,\\
e_1 \cdot e_3=ae_2, \\
e_3 \cdot e_1=be_2 ,\\
\end{array}
\right.
\]
where $a,b \in \bfk$.
}\end{examples}

Let us return to the main construction of this section. 
It was explained at several
places~\cite{markl:JPAA96,markl:zebrulka,markl:ib,markl:ba} how a,
not-necessarily acyclic, quasi-free resolution $(\calP,\pa=0)
\stackrel{\rho}{\longleftarrow} (\scrR,\pa)$ of an operad $\calP$, which
we assume for simplicity non-dg and concentrated in degree $0$,
determines a cohomology theory for $\calP$-algebras with coefficients
in itself. If $\calP$ is quadratic and if we take as $(\scrR,\pa)$ the
dual bar construction (recalled in
Section~\ref{Za_chvili_jdu_za_Jaruskou.}) 
of the quadratic dual $\calP^!$, we get the
`standard' cohomology $\PHst*$ as the cohomology of the
`standard' cochain complex
\[
\PCst1 \stackrel{\dst^1}{\longrightarrow} \PCst2
\stackrel{\dst^2}{\longrightarrow}
\PCst3 \stackrel{\dst^3}{\longrightarrow}
\PCst4 \stackrel{\dst^4}{\longrightarrow} \cdots
\]
in which $\PCst p := \Hom(\calP^!(p) \ot_{\Sigma_p} \otexp Vp,V)$, $p
\geq 1$, and the differential $\dst^*$ is induced from the structure
of $\calP^!$ and $A$, see~\cite[Section~8]{fox-markl:ContM97} 
or~\cite[Definition~II.3.99]{markl-shnider-stasheff:book}.  
This type of (co)homology was
considered in the seminal paper~\cite{ginzburg-kapranov:DMJ94}.

The deformation (also called, in~\cite{markl:JPAA96}, the {\em cotangent\/})
cohomology uses the minimal model of $\calP$ in place of
$(\scrR,\pa)$. Recall~\cite[p.~1479]{markl:zebrulka} that the {\em minimal
model\/} of an operad $\calP$ is a homology isomorphism
\[
(\calP,0) \stackrel{\rho}{\longleftarrow} (\Gamma (M),\partial)
\] 
of dg-operads such that the image of $\partial$ consists of
decomposable elements of the free operad $\Gamma (M)$ (the
minimality). It is
known~\cite[Section~II.3.10]{markl-shnider-stasheff:book} that each
operad with $\calP(1) \cong \bfk$ admits a minimal model unique up to
isomorphism. The {\em deformation cohomology\/} $\PH*$ is the
cohomology of the complex
\[
\PC1 \stackrel{\delta^1}{\longrightarrow} \PC2
\stackrel{\delta^2}{\longrightarrow}
\PC3 \stackrel{\delta^3}{\longrightarrow}
\PC4 \stackrel{\delta^4}{\longrightarrow} \cdots
\]
in which $\PC1 := \Hom(V,V)$ and 
\[
\PC p := \Hom(\textstyle\bigoplus_{q\geq 2} E_{p-2}(q) \ot_{\Sigma_q}
\otexp Vq,V), \ \mbox { for } p \geq 2.
\] 
The differential $\delta^*$ is defined by the formula which can be
found in~\cite[Section~2]{markl:ib} or in the introduction
to~\cite{markl:ba}.  If $\calP$ is quadratic Koszul, the dual bar
construction of $\calP^!$ is,
by~\cite[Proposition~2.6]{markl:zebrulka}, isomorphic to the minimal
model of $\calP$, thus the standard and deformation cohomologies
coincide, giving rise to the `standard' constructions such as the
Hochschild, Harrison or Chevalley-Eilenberg cohomology.

Neither $\PHst*$ nor $\PH*$ have the $0$th term. A natural $H^0$
exists only for algebras for which the concept of unitality makes
sense. This is not always the case. Assume, for example, that an
anti-associative algebra $A = (V,\mu)$ has a unit, i.e.~and element $1
\in V$ such that $1a = a1 = a$, for all $a \in V$. Then the
anti-associativity~(\ref{Jaruska_je_pusinka}) with $c = 1$ gives $ab +
ab = 0$, so $ab = 0$ for each $a,b \in V$.

Let us describe the standard cohomology $\Hst*$ of an
anti-associative algebra $A = (V,\mu)$. The operad $\tildeAss$ is,
by Proposition~\ref{JarKA}, self-dual and it follows from the 
description of $\tildeAss = \tildeAss{}^!$ given in
Example~\ref{Pozitri_uvidim_Jarusku.} that
$\Hst*$ is the cohomology of
\[
\Cst1 \stackrel{\dst^1}{\longrightarrow} \Cst2
\stackrel{\dst^2}{\longrightarrow}
\Cst3 \stackrel{\dst^3}{\longrightarrow}
0 \stackrel{0}{\longrightarrow} 0 \stackrel{0}{\longrightarrow} \cdots
\]
in which $\C p := \Hom(\otexp Vp,V)$ for $p = 1,2,3$, and all higher
$\C p$'s are trivial. The two nontrivial pieces of the differential are
basically the Hochschild differentials with ``wrong'' signs of some
terms:
\begin{align*}
\delta^1(\varphi)(a,b)& := a \varphi(b) - \varphi(ab) + \varphi(a)b,
\mbox { and}
\\
\delta^2(f)(a,b,c) &:= a f(b,c)+ f(ab,c)+ f(a,bc) +f(a,b)c,
\end{align*}
for $\varphi \in \Hom(V,V)$, $f \in \Hom(\otexp V2,V)$ and $a,b,c \in
V$. We abbreviated $\mu(a,b) = ab$, $\mu(a,\varphi(b)) = a\varphi(b)$,
\&c. One sees, in particular, that $\Hst p = 0$ for $p\geq 4$.

Let us describe the relevant part of the deformation
cohomology of $A$. It can be shown that  
$\tildeAss$ has the minimal model
\[
(\tildeAss,0) \stackrel{\rho}{\longleftarrow} (\Gamma (E),\partial)
\] 
with the generating $\Sigma$-module $E = \{E(a)\}_{a \geq 2}$ such that 
\begin{itemize}
\item[--]
$E(2)$ is generated by a degree $0$ bilinear operation $\mu_2 : V
   \ot V \to V$,
\item[--]
$E(3)$ is generated by a degree $1$ trilinear operation $\mu_3 :
   \otexp V3 \to V$, 
\item[--]
$E(4)=0$, and
\item[--]
$E(5)$ is generated by four $5$-linear degree $2$ operations
   $\mu_5^1,\mu_5^2,\mu_5^3,\mu_5^4 : \otexp V5 \to V$, 
\end{itemize}
so the minimal model of $\tildeAss$ is of the form
\[
(\tildeAss,0) \stackrel{\alpha}{\longleftarrow} (\Gamma(\mu_2,\mu_3,
\mu_5^1,\mu_5^2,\mu_5^3,\mu_5^4,\ldots),\pa).
\]
Notice the gap in the arity $4$ generators! We do not know the exact
form of the pieces $E(a)$, $a \geq 6$, of the generating
$\Sigma$-module $E$, but we know that they do not contain elements of
degrees $\leq 2$. 
We can still, however, determine the Euler
characteristic of the generating $\Sigma$-module using
Proposition~\ref{Jaruska_mi_udelala_svickovou!!}. 

Inverting the generating series $g_{\sstildeAss}(t) = t + t^2 + t^3$, 
we read the Euler characteristic of the $\Sigma$-module 
of generators of the minimal model of $\tildeAss$ as
\begin{align*}
&\chi(E(2)) = 1,\ \chi(E(3)) = -1,\ \chi(E(4)) = 0,\ \chi(E(5)) = 4,   
\\
&\chi(E(6)) = -14,\ \chi(E(7)) = 30,\ \chi(E(8)) = -33,\
\chi(E(9)) = -55, \ldots
\end{align*}
The differential $\pa$ of the relevant generators is given by:
\begin{align*}
\partial ( \mu_2) &:= 0,
\\
\partial ( \mu_3) &:= \mu_2 \circ_1 \mu_2 +\mu_2 \circ_2 \mu_2,
\\
\partial ( \mu_5^1)&:=  (\mu_2 \circ_2 \mu_3) \circ_4 \mu_2 - (\mu_3
\circ_3 \mu_2) \circ_4 \mu_2 +(\mu_2 \circ_1 \mu_2) \circ_3 \mu_3 -
(\mu_3 \circ_1 \mu_2 )\circ_3 \mu_2 
\\ &\ +(\mu_2 \circ_1 \mu_3)
\circ_1 \mu_2 - (\mu_3 \circ_1 \mu_2) \circ_1 \mu_2 +(\mu_2 \circ_1
\mu_3) \circ_4 \mu_2 - (\mu_3 \circ_2 \mu_2) \circ_4 \mu_2,
\\
\partial ( \mu_5^2)&:=(\mu_3 \circ_1 \mu_2) \circ_1 \mu_2 - 
(\mu_2 \circ_1 \mu_3) \circ_1 \mu_2 
+(\mu_2 \circ_1 \mu_3) \circ_3 \mu_2 - (\mu_3 \circ_2 \mu_2 )\circ_3
\mu_2 
\\
&\ +(\mu_2 \circ_2 \mu_3) \circ_3 \mu_2 - (\mu_3 \circ_3 \mu_2) \circ_3 \mu_2 
+(\mu_2 \circ_1 \mu_2) \circ_3 \mu_3 - (\mu_3 \circ_1 \mu_2) \circ_4 \mu_2,
\\
\partial ( \mu_5^3)& :=  
(\mu_3 \circ_2 \mu_2) \circ_4 \mu_2 - (\mu_2 \circ_2 \mu_3) \circ_2 \mu_2 
+(\mu_3 \circ_2 \mu_2) \circ_2 \mu_2 - (\mu_2 \circ_1 \mu_2 )\circ_2
\mu_3 
\\
&\ +(\mu_2 \circ_1 \mu_3) \circ_3 \mu_2 - (\mu_2 \circ_1 \mu_3) \circ_1 \mu_2 
+(\mu_2 \circ_1 \mu_2) \circ_1 \mu_3 - (\mu_3 \circ_1 \mu_2) \circ_2
\mu_2,
\mbox { and} 
\\
\partial ( \mu_5^4)&:=  (\mu_3 \circ_1 \mu_2) \circ_3 \mu_2 
- (\mu_3 \circ_3 \mu_2) \circ_3 \mu_2 
+(\mu_2 \circ_2 \mu_3) \circ_3 \mu_2 - (\mu_3 \circ_2 \mu_2 )\circ_3
\mu_2 
\\
&\ +(\mu_2 \circ_1 \mu_2) \circ_2 \mu_3 - (\mu_2 \circ_1 \mu_3) \circ_2 \mu_2
+(\mu_2 \circ_1 \mu_2) \circ_1 \mu_3 - (\mu_2 \circ_1 \mu_3) \circ_1 \mu_2.
\end{align*}
One can make the formulas clearer by using the nested bracket notation.
{}For instance, $\mu_2$ will be represented by $(\q\q)$, $\mu_3$ by
$(\q\q\q)$, $\mu^2_5$ by $(\q\q\q\q\q)^2$, $\mu_3 \circ_2 \mu_2$ by
$(\q(\q\q)\q)$, \&c. With this shorthand, the formulas for the
differential read
\begin{align*}
\pa(\q\q) &:= 0,
\\
\pa(\q\q\q) &:= ((\q\q)\q) + (\q(\q\q)),
\\
\pa(\q\q\q\q\q)^1 &:= (\q(\q\q(\q\q))) - (\q\q(\q(\q\q))) + ((\q\q)(\q\q\q)) -
                    ((\q\q)(\q\q)\q)
\\
&\ + 
(((\q\q)\q\q)\q) - (((\q\q)\q)\q\q) + ((\q\q\q)(\q\q)) - (\q(\q\q)(\q\q)),
\\
\pa(\q\q\q\q\q)^2 &:= (((\q\q)\q)\q\q) - (((\q\q)\q\q)\q) + ((\q\q(\q\q))\q) -
                    (\q(\q(\q\q))\q)
\\
&\ +
(\q(\q(\q\q)\q)) - (\q\q((\q\q)\q)) + ((\q\q)(\q\q\q)) - ((\q\q)\q(\q\q)),
\\
\pa(\q\q\q\q\q)^3 &:= (\q(\q\q)(\q\q)) - (\q((\q\q)\q\q)) + (\q((\q\q)\q)\q) -
                    ((\q(\q\q\q))\q)
\\
&\ +
((\q\q(\q\q))\q) - (((\q\q)\q\q)\q) + (((\q\q\q)\q)\q) - ((\q(\q\q))\q\q),
\mbox { and}
\\
\pa(\q\q\q\q\q)^4 &:= ((\q\q)(\q\q)\q) - (\q\q((\q\q)\q)) + (\q(\q(\q\q)\q)) -
                    (\q(\q(\q\q))\q)
\\
&\ +
((\q(\q\q\q))\q) - ((\q(\q\q)\q)\q) + (((\q\q\q)\q)\q) - (((\q\q)\q\q)\q).
\end{align*}

Let us indicate how we obtained the above formulas. We observed
first that the degree-one subspace $\Gamma(\mu_2,\mu_3)(5)_1 \subset
\Gamma(\mu_2,\mu_3)(5)$ is spanned by $\circ_i$-compositions of two
$\mu_2$'s and one $\mu_3$, i.e., in the bracket language, by nested
bracketings of five $\q$'s with two binary and one ternary
bracket. These elements are in one-to-one correspondence with 
the edges of the
$5$th Stasheff associahedron $K_5$ shown in Figure~\ref{K5},
see~\cite[Section~II.1.6]{markl-shnider-stasheff:book}. 

\begin{figure}[t]
\begin{center}
\setlength{\unitlength}{0.0005in}%
\begin{picture}(3907,3435)(2847,-4288)
\thicklines
\put(3601,-1261){\line( 1,-3){300}}
\put(3901,-2161){\line(-2,-5){200}}
\put(3301,-3661){\line(2,5){300}}
\put(6001,-1261){\line(-1,-3){300}}
\put(5701,-2161){\line( 2,-5){200}}
\put(6301,-3661){\line(-2,5){300}}
\put(4801,-2461){\line(-1,-1){600}}
\put(4201,-3061){\line( 1,-1){600}}
\put(4801,-3661){\line( 1, 1){600}}
\put(5401,-3061){\line(-1, 1){600}}
\put(4801,-2461){\line( 0, 1){1500}}
\put(4801,-961){\line(-4,-1){1200}}
\put(3601,-1261){\line(-1,-2){600}}
\put(3001,-2461){\line( 2,-1){1200}}
\put(5401,-3061){\line( 2, 1){1200}}
\put(6601,-2461){\line(-1, 2){600}}
\put(6001,-1261){\line(-4, 1){1200}}
\put(3001,-2461){\line( 1,-4){300}}
\put(3301,-3661){\line( 5,-2){1500}}
\put(4801,-4261){\line( 0, 1){600}}
\put(3901,-2161){\line( 1, 0){825}}
\put(4876,-2161){\line( 1, 0){825}}
\put(6601,-2461){\line(-1,-4){300}}
\put(6301,-3661){\line(-5,-2){1500}}
\put(4801,-961){\makebox(0,0){$\bullet$}}
\put(6001,-1261){\makebox(0,0){$\bullet$}}
\put(6601,-2461){\makebox(0,0){$\bullet$}}
\put(6301,-3661){\makebox(0,0){$\bullet$}}
\put(4801,-4261){\makebox(0,0){$\bullet$}}
\put(4801,-3661){\makebox(0,0){$\bullet$}}
\put(5401,-3061){\makebox(0,0){$\bullet$}}
\put(4801,-2449){\makebox(0,0){$\bullet$}}
\put(4201,-3061){\makebox(0,0){$\bullet$}}
\put(3601,-1261){\makebox(0,0){$\bullet$}}
\put(3001,-2461){\makebox(0,0){$\bullet$}}
\put(3301,-3661){\makebox(0,0){$\bullet$}}
\put(3901,-2161){\makebox(0,0){$\bullet$}}
\put(5701,-2161){\makebox(0,0){$\bullet$}}
\end{picture}
\end{center}
\caption{Stasheff's associahedron $K_5$.\label{K5}}
\end{figure}
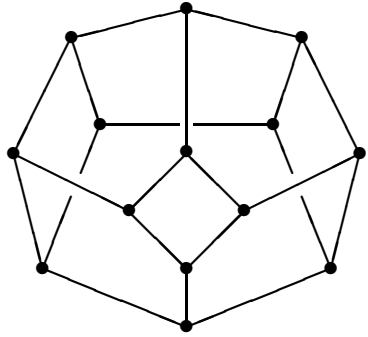

Let $x_e \in \Gamma(\mu_2,\mu_3)(5)_1$ be the element indexed by an
edge $e$ of $K_5$. Clearly $\pa(x_e) = x_a + x_b$, where $a,b$
are the endpoints of $e$ and $x_a, x_b \in \Gamma(\mu_2)(5)_0$ the
elements given by the nested bracketings of five $\q$'s with 
three binary brackets
corresponding to these endpoints. 
We concluded that the $\pa$-cycles in $\Gamma(\mu_2,\mu_3)(5)_1$ are
generated by closed edge-paths of even length in $K_5$; the cycle
corresponding to such a path $P = (e_1,e_2,\ldots,e_{2r})$ being
\[
\sum_{1\leq i \leq 2r}(-1)^{i+1} x_{e_i}.
\]
 
Examples of these paths are provided by two
adjacent pentagons in $K_5$ such as the ones shown in
Figure~\ref{Jarunka}.
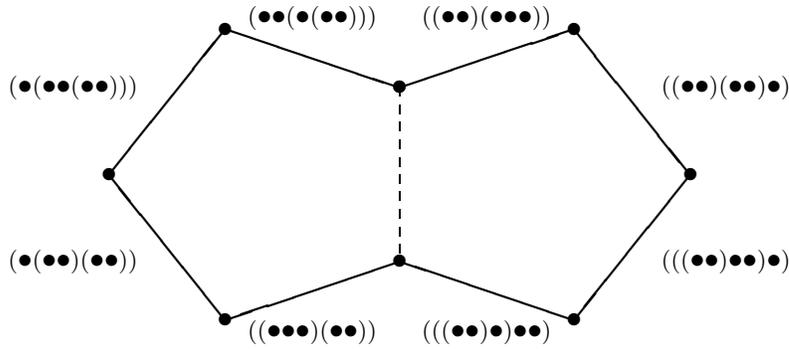
\begin{figure}
{
\unitlength=1.1pt
\begin{picture}(200.00,110.00)(0.00,0.00)
\thicklines
\put(10.00,20.00){\makebox(0.00,0.00)[r]{\scriptsize $(\qq(\qq\qq)(\qq\qq))$}}
\put(70.00,0.00){\makebox(0.00,0.00)[t]{\scriptsize $((\qq\qq\qq)(\qq\qq))$}}
\put(130.00,0.00){\makebox(0.00,0.00)[t]{\scriptsize $(((\qq\qq)\qq)\qq\qq)$}}
\put(190.00,20.00){\makebox(0.00,0.00)[l]{\scriptsize $(((\qq\qq)\qq\qq)\qq)$}}
\put(190.00,80.00){\makebox(0.00,0.00)[l]{\scriptsize $((\qq\qq)(\qq\qq)\qq)$}}
\put(130.00,100){\makebox(0.00,0.00)[b]{\scriptsize $((\qq\qq)(\qq\qq\qq))$}}
\put(70.00,100.00){\makebox(0.00,0.00)[b]{\scriptsize $(\qq\qq(\qq(\qq\qq)))$}}
\put(10.00,80.00){\makebox(0.00,0.00)[r]{\scriptsize $(\qq(\qq\qq(\qq\qq)))$}}
\put(160.00,0.00){\line(-3,1){60.00}}
\put(200.00,50.00){\line(-4,-5){40.00}}
\put(160.00,100.00){\line(4,-5){40.00}}
\put(100.00,80.00){\line(3,1){60.00}}
\put(100.00,80.00){\line(0,1){0.00}}
\put(40.00,100.00){\line(3,-1){60.00}}
\put(0.00,50.00){\line(4,5){40.00}}
\put(40.00,0.00){\line(-4,5){40.00}}
\put(100.00,20.00){\line(-3,-1){60.00}}

\put(160.00,0.00){\makebox(0.00,0.00){$\bullet$}}
\put(200.00,50.00){\makebox(0.00,0.00){$\bullet$}}
\put(160.00,100.00){\makebox(0.00,0.00){$\bullet$}}
\put(100.00,80.00){\makebox(0.00,0.00){$\bullet$}}
\put(100.00,80.00){\makebox(0.00,0.00){$\bullet$}}
\put(40.00,100.00){\makebox(0.00,0.00){$\bullet$}}
\put(0.00,50.00){\makebox(0.00,0.00){$\bullet$}}
\put(40.00,0.00){\makebox(0.00,0.00){$\bullet$}}
\put(100.00,20.00){\makebox(0.00,0.00){$\bullet$}}
\thinlines
\multiput(100,80)(0,-6){10}{\line(0,-1){3}}
\end{picture}}
\caption{\label{Jarunka}
An closed edge path of length $8$ in $K_5$ defining $\pa(\mu_5^1)$.}
\end{figure}
There are also three edge paths of length $4$ given by the three
square faces of $K_5$, but the corresponding cohomology classes have
already been killed by the $\pa$-images of the compositions $\mu_3
\circ_i \mu_3$, $i = 1,2,3$. We showed that there are four
linearly independent edge paths of length $8$ that, together with the
three squares, generate all edge paths of even length in $K_5$. The
generators $\mu_5^1,\mu_5^2,\mu_5^3,\mu_5^4$ correspond to these
paths. 

Also for $a \geq 6$ the $1$-dimensional $\pa$-cycles in 
$\Gamma(\mu_2,\mu_3, \mu_5^1,\mu_5^2,\mu_5^3,\mu_5^4)(a)_1$ are given
by closed edge paths of even length 
in the associahedron $K_a$ but one
can show that they are all generated by the squares and the images
of the paths as in Figure~\ref{Jarunka} under the
face inclusions $K_5 \hookrightarrow K_a$. Therefore 
$(\Gamma(\mu_2,\mu_3, \mu_5^1,\mu_5^2,\mu_5^3,\mu_5^4),\pa)$ is acyclic
in degree $1$, so $\mu_5^1,\mu_5^2,\mu_5^3,\mu_5^4$ are the only
degree two generators of the minimal model of $\tildeAss$.

The construction extends to a minimal model $(\Gamma(E),\pa)$ of the
operad $\tildeAss$ whose differential is {\em not\/} quadratic. It is
simple to show that there does not exists a minimal algebra
$(\Gamma(E'),\pa')$, isomorphic to $(\Gamma(E),\pa)$, with a quadratic
differential. Therefore $\tildeAss$ does not admit a quadratic minimal
model and its non-Koszulness follows not only from the
Ginzburg-Kapranov criterion, but also from
Fact~\ref{sec:dual-quadr-oper-1}.

{}From the above description of the minimal model of $\tildeAss$ one
easily gets the relevant part
\[
\C1 \stackrel{\delta^1}{\longrightarrow} \C2
\stackrel{\delta^2}{\longrightarrow}
\C3 \stackrel{\delta^3}{\longrightarrow}
\C4 \stackrel{\delta^4}{\longrightarrow} \cdots
\]
of the complex defining the deformation cohomology of an
anti-associative algebra $A = (V,\mu)$. One has

-- $\C1 = \Hom(V,V)$

-- $\C2 = \Hom(\otexp V2,V)$

-- $\C3 = \Hom (\otexp V3,V)$, and

-- $\C4 = \Hom(\otexp V5,V) \oplus \Hom(\otexp V5,V) \oplus
   \Hom(\otexp V5,V) \oplus \Hom(\otexp V5,V)$.

\noindent
Observe that $\Cst p = \C p$ for $p = 1,2,3$, while 
$\C4$ consists of $5$-linear maps. The
differential $\delta^p$ agrees with $\dst^p$ for  $p = 1,2$ while,
for $g \in \C3$, one has
\[
\delta^3 (g) = (\delta_1^3(g),\delta_2^3(g),\delta_3^3(g),\delta_4^3(g)),
\]
where
\begin{align*}
\delta_1^3(g)(a,b,c,d,e) &:= 
ag(b,c,de) - g(a,b,c(de)) + (ab)g(c,d,e) - g(ab,cd,e) 
\\
&\
+ g(ab,c,d)e - g((ab)c,d,e) + g(a,b,c)(de) - g(a,bc,de),
\\
\delta_2^3(g)(a,b,c,d,e) &:= 
g((ab)c,d,e) - g(ab,c,d)e + g(a,b,cd)e - g(a,b(cd),e)
\\
&\
+ ag(b,cd,e) - g(a,b,(cd)e) + (ab)g(c,d,e) - g(ab,c,de),
\\
\delta_3^3(g)(a,b,c,d,e) &:= 
g(a,bc,de) - ag(bc,d,e) + g(a,(bc)d,e) - a(g(b,c,d)e)
\\
&\
+ g(a,b,cd)e - g(ab,c,d)e + (g(a,b,c)d)e - g(a(bc),d,e), \mbox { and}
\\
\delta_4^3(g)(a,b,c,d,e) &:= 
g(ab,cd,e) - g(a,b,(cd)e) + a g(b,cd,e) - g(a,b(cd),e)
\\
&\
+ (ag(b,c,d))e - g(a,bc,d)e + (g(a,b,c)d)e - g(ab,c,d)e,
\end{align*}
for $a,b,c,d,e \in V$.  The following proposition follows
from~\cite[Section~4]{markl:JPAA96}.

\begin{proposition}
The cohomology $\H*$ governs deformations of
anti-associative algebras. This means that $\H2$ parametrizes isomorphism
classes of infinitesimal deformations and $\H3$ contains obstructions
to extensions of partial deformations.
\end{proposition}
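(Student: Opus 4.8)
The plan is to obtain the proposition as an instance of the general deformation theory of algebras over an operad developed in~\cite[Section~4]{markl:JPAA96} (compare also~\cite{markl:ib,markl:ba}); the minimal model of $\tildeAss$ displayed above is the only operad-specific ingredient, so the argument reduces to checking that the hypotheses of that theory hold for $\calP=\tildeAss$ and then transcribing its conclusions. First I would recall the setup. For any operad $\calP$ with $\calP(1)\cong\bfk$ a minimal model $(\calP,0)\stackrel{\rho}{\longleftarrow}(\Gamma(E),\pa)$ exists and is unique up to isomorphism, so the complex $(\PC{*},\delta^{*})$ is well defined up to isomorphism; for $\calP=\tildeAss$ the hypothesis $\tildeAss(1)\cong\bfk$ holds by Example~\ref{Pozitri_uvidim_Jarusku.}. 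A $\calP$-algebra structure on $V$ is the same thing as a morphism $\beta\colon(\Gamma(E),\pa)\to(\mathcal{E}nd_V,0)$ of dg-operads, where $\mathcal{E}nd_V$ is the endomorphism operad of $V$ with zero differential; equivalently, $\beta$ is a Maurer--Cartan element of the deformation $L_\infty$-algebra $\mathfrak g$ controlling $\calP$-structures on $V$, whose underlying graded vector space is that of $\PC{*}$. The complex $(\PC{*},\delta^{*})$ is precisely $\mathfrak g$ with its differential twisted by $\beta$, so that $\delta^{*}$ is the linearization at the given structure of the Maurer--Cartan equation. A one-parameter deformation of $A=(V,\mu)$ is then a $\bfk[[t]]$-linear family $\beta_t=\beta+t\beta_1+t^{2}\beta_2+\cdots$ of such morphisms, and an isomorphism of deformations is an automorphism of $V[[t]]$ of the form $\id+t\psi_1+t^{2}\psi_2+\cdots$ with $\psi_i\in\PC1=\Hom(V,V)$.

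Second, I would spell out the two assertions. Reducing the Maurer--Cartan equation modulo $t^{2}$ shows that the first-order term $\beta_1$ of a deformation is exactly a $\delta^{2}$-cocycle in $\PC2$, and that replacing $\beta_t$ by an isomorphic deformation changes $\beta_1$ by $\delta^{1}(\psi_1)$; hence passing to the first-order term sets up a bijection between isomorphism classes of infinitesimal deformations and $\H2$. For obstructions, if $\beta_t$ is a deformation defined modulo $t^{m}$, then the failure of the Maurer--Cartan equation at order $t^{m+1}$ is an element $O_{m+1}\in\PC3$ assembled from $\beta_1,\dots,\beta_{m-1}$ through the brackets of $\mathfrak g$; a routine computation using $\pa^{2}=0$ together with the Maurer--Cartan and Jacobi identities gives $\delta^{3}(O_{m+1})=0$, and the partial deformation extends one order further precisely when $[O_{m+1}]=0$ in $\H3$. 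Iterating, every obstruction to prolonging a partial deformation lies in $\H3$. This is exactly what is proved, in full generality, in~\cite[Section~4]{markl:JPAA96}, and specializing to $\calP=\tildeAss$ with the minimal model above yields the proposition; moreover $\H{*}$ then coincides with the triple cohomology of~\cite{fox:JPAA93,fox-markl:ContM97}.

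The point that calls for care, rather than a real obstacle, is bookkeeping: one must verify that the concrete complex $(\C{*},\delta^{*})$ written down above from our minimal model of $\tildeAss$ coincides, signs and degrees included, with the twisted convolution complex to which~\cite{markl:JPAA96} refers; in particular, one must check that our convention of placing the generators $E_{p-2}(q)$ in cohomological degree $p$ matches the indexing used there. Since~\cite{markl:JPAA96} applies to arbitrary operads that admit a minimal model and makes no Koszulity assumption, nothing operad-specific remains once this identification is made.
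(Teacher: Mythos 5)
Your proposal is correct and follows essentially the same route as the paper, whose entire argument is the single remark that the proposition ``follows from~\cite[Section~4]{markl:JPAA96}''; you simply unpack what that general cotangent-cohomology/Maurer--Cartan machinery says and note the degree bookkeeping $\PC p=\Hom(\bigoplus_{q}E_{p-2}(q)\ot_{\Sigma_q}\otexp Vq,V)$ needed to specialize it to the minimal model of $\tildeAss$.
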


\section{Free partially associative $n$-algebras}
\label{Posledni_den_v_Mulhouse.}

In~\cite{gnedbaye:CM96}, A.V.\ Gnedbaye described free degree $d$
partially associative $n$-algebras in the situations when
$d=0$ and $n$ was even.  In this section we extend Gnedbaye's
description of free $\pAss^n_d$-algebras to all cases when $d$ and $n$
have the same parity.

Let $\Free$ be the free $\pAss^n_d$-algebra generated by
a graded vector space $V$. It obviously decomposes as
\[
\Free = \bigoplus_{l \geq 0} \Free_l,
\] 
where $\Free_l \subset \Free$ is the subspace  generated by elements
obtained by applying the structure $n$-ary 
multiplication $\mu$ to elements of $V$ $l$-times. For instance,
$\Free_0 \cong V$ and $\Free_1 \cong \otexp Vn$. 

Denote by $\Tree l$, $l \geq 1$, the set of planar directed (= rooted)
trees with $l(n-1)+1$ leaves whose vertices have precisely $n$
incoming edges (see~\cite[Section~4]{markl:handbook} 
or~\cite[II.1.5]{markl-shnider-stasheff:book} for terminology). 
We extend the definition to $l = 0$ by putting $\Tree 0 :=
\{\exepttree\}$, the one-point set consisting of the exceptional tree
with one leg and no internal vertex. Clearly, each tree in $\Tree l$
has exactly $l$ vertices. For each $l$ there is a natural epimorphism
\begin{equation}
\label{epp}
\omega: \Tree l \times \otexp V{l(n-1)+1}  \epi \Free_l
\end{equation}
given by interpreting the trees in $\Tree l$ as the `pasting schemes'
for the iterated multiplication~$\mu$. More precisely, if  $T \in
\Tree l$ and $\Rada v1{l(n-1)+1} \in V$, then
\[
\omega(T \times (\Rada v1{l(n-1)+1}))\in \Free_l
\] 
is obtained by
decorating the vertices of $T$ by $\mu$,
the leaves of $T$ by elements $\Rada v1{l(n-1)+1}$, 
and performing the indicated composition, observing the Koszul 
sign rule in the nontrivially graded
cases. 

Let $\Sree l \subset \Tree l$ be the subset consisting of trees having
the property that the leftmost incoming edge of each vertex is a
leaf. Since these trees correspond to the generators of partially
associative algebras considered by Gnedbaye in~\cite{gnedbaye:CM96},
we call them {\em Gnedbaye's trees\/}. Therefore $\Sree 0 = \Tree 0 =
\{\exepttree\}$, $\Sree 1$ is the one-point set consisting of the
$n$-corolla
\begin{center}
\unitlength=.900000pt
\begin{picture}(60.00,40.00)(0.00,30.00)
\thicklines
\put(24.00,30.00){\makebox(0.00,0.00){$\cdots$}}
\put(42.00,30.00){\makebox(0.00,0.00){$\cdots$}}
\put(30.00,50.00){\makebox(0.00,0.00){$\bullet$}}
\put(30.00,27.00){\makebox(0.00,0.00)[t]
           {$\underbrace{\rule{55pt}{0pt}}_n$}}
\put(30.00,50.00){\line(3,-2){30.00}}
\put(30.00,50.00){\line(-1,-1){20.00}}
\put(30.00,50.00){\line(-3,-2){30.00}}
\put(30.00,50.00){\line(0,1){20.00}}
\end{picture}
\end{center}
and $\Sree 2$ has $n-1$ elements  
\begin{center}
\unitlength=.900000pt
\begin{picture}(60.00,70.00)(0.00,0.00)
\thicklines
\put(35.00,23.00){\makebox(0.00,0.00)[l]{\scriptsize $i$th leaf}}
\put(32.00,-5.00){\makebox(0.00,0.00){$\cdots$}}
\put(22.00,30.00){\makebox(0.00,0.00){$\cdots$}}
\put(42.00,30.00){\makebox(0.00,0.00){$\cdots$}}
\put(30.00,15.00){\makebox(0.00,0.00){$\bullet$}}
\put(30.00,50.00){\makebox(0.00,0.00){$\bullet$}}
\put(30.00,15.00){\line(3,-2){30.00}}
\put(30.00,15.00){\line(-1,-1){20.00}}
\put(30.00,15.00){\line(-3,-2){30.00}}
\put(30.00,50.00){\line(0,-1){35.00}}
\put(30.00,50.00){\line(3,-2){30.00}}
\put(30.00,50.00){\line(-1,-1){20.00}}
\put(30.00,50.00){\line(-3,-2){30.00}}
\put(30.00,50.00){\line(0,1){20.00}}
\put(75.00,30.00){\makebox(0.00,0.00)[l]{,~$2\leq i \leq n$.}}
\end{picture} 
\end{center}
 It is clear that, for $l \geq 3$, $\Sree l$ consists of trees of the form
\begin{equation}
\label{JarunkA}
\raisebox{-30pt}{\rule{0pt}{0pt}}
\unitlength=1pt
\begin{picture}(60.00,30.00)(0.00,40.00)
\thicklines
\put(22.00,30.00){\makebox(0.00,0.00){$\cdots$}}
\put(42.00,30.00){\makebox(0.00,0.00){$\cdots$}}
\put(30.00,50.00){\makebox(0.00,0.00){$\bullet$}}
\put(30.00,50.00){\line(0,-1){20.00}}
\put(30.00,50.00){\line(3,-2){30.00}}
\put(30.00,50.00){\line(-1,-1){20.00}}
\put(30.00,50.00){\line(-3,-2){50.00}}
\put(30.00,50.00){\line(0,1){20.00}}
\put(25.00,30.00){\makebox(0.00,0.00)[t]{\gl{S_2}}}
\put(45.00,30.00){\makebox(0.00,0.00)[t]{\gl{S_i}}}
\put(75.00,30.00){\makebox(0.00,0.00)[t]{\gl{S_n}}}
\end{picture}
\end{equation}
where $S_i \in \Sree {l_i}$ for $2 \leq i \leq n$ and $l_2 + \cdots +
l_n = l-1$.

As we already mentioned at the beginning of this section, Gnedbaye
described, in \cite[Proposition~12]{gnedbaye:CM96}, free degree $d$
partially associative $n$-algebras for $d=0$ and $n$ even. We extend
his result to the cases where $n$ and $d$ are of the same parity:

\begin{theorem}
\label{jarkA}
Assume that $n$ and $d$ are of the same parity. Then the
restriction (denoted by the same symbol)
\begin{equation}
\label{jarKa}
\omega : \Sree l \times \otexp V{l(n-1)+1}  \longrightarrow \Free_l
\end{equation}
of the epimorphism~(\ref{epp}) is an isomorphism, for each $l \geq 0$.
\end{theorem}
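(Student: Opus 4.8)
The epimorphism~(\ref{epp}) identifies $\Free_l$ with $\pAss^n_d(m)\otimes_{\Sigma_m}\otexp Vm$, $m:=l(n-1)+1$, the tree $S\in\Sree l\subset\Tree l$ corresponding to the image in $\pAss^n_d(m)$ of the leaf-labelled tree it represents in the free operad $\Gamma(\mu)(m)$. Hence~(\ref{jarKa}) is an isomorphism for every $l$ as soon as the images of $\Sree l$ form a free $\bfk[\Sigma_m]$-basis of $\pAss^n_d(m)$, i.e.\ as soon as (i) these images span $\pAss^n_d(m)$ as a $\Sigma_m$-module and (ii) $\dim\pAss^n_d(m)=|\Sree l|\cdot m!$. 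The plan is to prove (i) by a ``combing'' rewrite and then to deduce the nonobvious inequality $\dim\pAss^n_d(m)\ge|\Sree l|\cdot m!$ in (ii) from the Koszulity of $\pAss^n_d$, the reverse inequality following from (i).

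For (i): in the defining relation~(\ref{Jaruska_sibalsky_mrka.}) the summand $\mu\circ_1\mu$ has coefficient $+1$, so it rewrites $\mu\circ_1\mu$ as a linear combination of the $\mu\circ_i\mu$ with $2\le i\le n$. Applying this relation at any vertex of a tree whose first incoming edge is internal replaces that tree by a linear combination of trees, and I would show that each such step strictly decreases the complexity $\Phi(T):=\sum_x\ell_1(x)$, the sum over the vertices $x$ of $T$ of the number of leaves of the subtree grafted onto the first input of $x$. A direct inspection of the leaf orders of $\mu\circ_i\mu$ shows that only the two vertices involved in the rewrite change, and that the decrease equals $\sum_{j\ne i}(\text{number of leaves on the }j\text{-th input of }w)\ge n-1\ge 1$, where $w$ is the vertex sitting on the first input of the combed vertex and $i\ge 2$ is the slot into which it is moved. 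Since $\Phi(T)\ge l$ with equality exactly when $T\in\Sree l$, induction on $\Phi$ shows that the image in $\pAss^n_d(m)$ of every leaf-labelled tree of $\Tree l$ is a combination of images of leaf-labelled trees of $\Sree l$; this is (i), and it yields $\dim\pAss^n_d(m)\le|\Sree l|\cdot m!$.

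For the opposite inequality I would invoke Koszulity. Being a quotient of $\Gamma(\mu)(m)$, the space $\pAss^n_d(m)$ is concentrated in internal degree $ld$, so $|\chi(\pAss^n_d(m))|=\dim\pAss^n_d(m)$ and it suffices to compute the Poincar\'e series $g_{\sspAss^n_d}$. By Theorem~\ref{gk} and Proposition~\ref{JarKA}, $g_{\sspAss^n_d}$ is the compositional inverse of $-g_{\sstAss^n_{n-2-d}}(-t)$; since $n$ and $d$ have the same parity, $n-2-d$ is even, so Lemma~\ref{l} gives $-g_{\sstAss^n_{n-2-d}}(-t)=t/(1-(-1)^{n-1}t^{n-1})$, whence $g_{\sspAss^n_d}$ is the unique power series $Y$ with $Y=t-(-1)^{n-1}tY^{n-1}$. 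On the other hand, the recursive description~(\ref{JarunkA}) of $\Sree l$ (a root vertex with one leaf and $n-1$ subtrees in $\Sree{l_2},\dots,\Sree{l_n}$ with $l_2+\cdots+l_n=l-1$) gives that $S(t):=\sum_{l\ge 0}|\Sree l|\,t^{l(n-1)+1}$ satisfies $S=t+tS^{n-1}$, and a short coefficient comparison (using once more that $n\equiv d$) shows that $\check S(t):=\sum_{l\ge 0}(-1)^{ld}|\Sree l|\,t^{l(n-1)+1}$ satisfies the same equation $\check S=t-(-1)^{n-1}t\check S^{n-1}$. Therefore $g_{\sspAss^n_d}=\check S$, and reading off the coefficient of $t^{m}$, together with the internal-degree remark, gives $\dim\pAss^n_d(m)=|\Sree l|\cdot m!$, which establishes (ii) and finishes the proof.

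The only input that is not formal bookkeeping is the Koszulity of $\pAss^n_d$ from Theorem~\ref{2}. The two steps that require genuine care are the termination of the combing rewrite, which I expect to be the main obstacle and which is handled by the monovariant $\Phi$, and the sign bookkeeping in matching the two functional equations, which is precisely where the hypothesis that $n$ and $d$ have the same parity enters.
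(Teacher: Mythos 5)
Your proposal is correct and follows essentially the same route as the paper: surjectivity by combing trees using the partial-associativity relation, and injectivity by matching $\card(\Sree l)$ against $\dim\pAss^n_d(l(n-1)+1)/(l(n-1)+1)!$ via the Ginzburg--Kapranov functional equation in the Koszul case (the paper's Proposition~\ref{Za_tri_dny_uz_domu} is exactly your generating-function computation, phrased as a recursion for the coefficients $A^n_l$). The only cosmetic difference is that the paper proves the spanning step by structural induction on the tree rather than with your termination monovariant $\Phi$.
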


Observe that, if the parities of $d$ and $n$ are as in the statement, 
the operad $\pAss^n_d$ is Koszul by Theorem~\ref{2}.

\begin{proof}[Proof of Theorem~\ref{jarkA}]
Axiom~(\ref{Jaruska_sibalsky_mrka.}) for partially associative algebras
implies that each iterated multiplication in $\Free_l$ can be 
brought into a linear combination of multiplications 
with the pasting schemes in $\Sree l$, 
i.e.~that the map~(\ref{jarKa}) is an epimorphism.
Let us prove this statement by induction. 

There is nothing to prove for $l=0,1$. Assume that we have established
the claim for all $0\leq l \leq k$, $k \geq 1$, and prove it for
$l=k$. Let $\mu_T$ be an iterated multiplication with the pasting
scheme $T \in \Tree k$. There are two possibilities. The {\em first
case:\/} the tree $T$ is of the form
\begin{equation}
\label{Dnes_na_obed_s_Jaruskou.}
\raisebox{-30pt}{\rule{0pt}{0pt}}
\unitlength=1pt
\begin{picture}(60.00,30.00)(0.00,40.00)
\thicklines
\put(22.00,30.00){\makebox(0.00,0.00){$\cdots$}}
\put(42.00,30.00){\makebox(0.00,0.00){$\cdots$}}
\put(30.00,50.00){\makebox(0.00,0.00){$\bullet$}}
\put(30.00,50.00){\line(0,-1){20.00}}
\put(30.00,50.00){\line(3,-2){30.00}}
\put(30.00,50.00){\line(-1,-1){20.00}}
\put(30.00,50.00){\line(-3,-2){50.00}}
\put(30.00,50.00){\line(0,1){20.00}}
\put(25.00,30.00){\makebox(0.00,0.00)[t]{\gl{T_2}}}
\put(45.00,30.00){\makebox(0.00,0.00)[t]{\gl{T_i}}}
\put(75.00,30.00){\makebox(0.00,0.00)[t]{\gl{T_n}}}
\end{picture}
\end{equation}
for some $T_i \in \Tree {l_i}$, $2 \leq i \leq n$, with $l_2 + \cdots
+ l_n  = k-1$. Then 
\[
\mu_T = \mu(\id \ot \mu_{T_2} \ot \cdots \ot \mu_{T_n}),
\]  
where $\mu_{T_i}$ denotes the iterated multiplication with the pasting
scheme $T_i$. By induction, each $\rada{\mu_{T_2}}{\mu_{T_n}}$
is a linear combination of iterated multiplications whose pasting
schemes belong to the subsets $\rada{\Sree {l_2}}{\Sree {l_n}}$,
respectively. The observation that the
tree~(\ref{Dnes_na_obed_s_Jaruskou.}) belongs to $\Sree k$ if
$T_i \in \Sree {l_i}$ for each $2 \leq i \leq n$ 
completes the induction step for this case.

In the {\em second case,\/} the tree $T$ has the form
\[
\raisebox{-30pt}{\rule{0pt}{0pt}}
\unitlength=1pt
\begin{picture}(60.00,30.00)(0.00,40.00)
\thicklines
\put(17.00,30.00){\makebox(0.00,0.00){$\cdots$}}
\put(47.00,30.00){\makebox(0.00,0.00){$\cdots$}}
\put(30.00,50.00){\makebox(0.00,0.00){$\bullet$}}
\put(30.00,50.00){\line(0,-1){20.00}}
\put(30.00,50.00){\line(3,-2){30.00}}
\put(30.00,50.00){\line(-3,-2){30.00}}
\put(30.00,50.00){\line(0,1){20.00}}
\put(15.00,30.00){\makebox(0.00,0.00)[t]{\gl{T_1}}}
\put(45.00,30.00){\makebox(0.00,0.00)[t]{\gl{T_i}}}
\put(75.00,30.00){\makebox(0.00,0.00)[t]{\gl{T_n}}}
\end{picture}
\]
where $T_i \in \Tree {l_i}$ for $1 \leq i \leq n$, $l_1 + \cdots + l_n
= k-1$ and $l_1 \geq 1$. Now
\[
\mu_T = \mu(\mu_{T_1} \ot \cdots \ot \mu_{T_n})
\]
and we may assume, by induction, that $T_i \in \Sree {l_i}$ for each
$1 \leq i \leq n$. In particular, $T_1$ is as in~(\ref{JarunkA}) with
$S_j \in \Sree {l'_j}$, $2\leq j \leq n$ such that $l'_2 + \cdots +l'_n =
l_1-1$, thus
\[
\mu_T = \mu(\mu \ot \otexp {\id}{n-1})(\id \ot \mu_{S_2}\ot \cdots \ot
\mu_{S_n} \ot \mu_{T_2} \ot \cdots \ot \mu_{T_n}).
\]
By~(\ref{Jaruska_sibalsky_mrka.}), one may replace the factor
$\mu(\mu \ot \otexp {\id}{n-1})$ by the linear combination
\[
-\sum_{i=2}^n (-1)^{(i+1)(n-1)}
\mu\left(\id^{\otimes i-1} \otimes \mu \otimes \id^{\otimes n-i}\right)
\]
which brings $\mu_T$ also in the second case to the
desired form and finishes the induction step.

To prove that the map~(\ref{jarKa}) it is an isomorphism, it suffices now
to compare the dimensions of $\Sree l \times \otexp V{l(n-1)+1}$ and
$\Free_l$. It follows from the
description~\cite[Proposition~II.1.25]{markl-shnider-stasheff:book} of
the free operad algebra that, for each $l \geq 0$,
\[
\Free_l \cong \pAss^n_d(l(n-1)+1) \otimes{\Sigma_{l(n-1)+1}}\otexp V{l(n-1)+1}.
\]  
Theorem~\ref{jarkA} will thus be established if we prove that 
\[
S^n_l := \card(\Sree l) \mbox { equals } A^n_l :=
\frac 1{(l(n-1)+1)!}    \dim(\pAss^n_d(l(n-1)+1)),
\]
for each $l \geq 0$. It easily follows from~(\ref{JarunkA}) 
that the sequence $\{S^n_l\}_{l \geq 0}$ is
determined by the recursion $S^n_0 = 1$ and
\begin{equation}
\label{Pusinka_Jarka}
S^n_l:=\sum_{\substack{0 \leq l_2 , \cdots ,l_n\leq l-1 
\\ l_2 +\cdots+ l_{n-1}= l-1}} S^n_{l_2} \cdots S^n_{l_n} \ \mbox{for} \ 
l \geq 1.
\end{equation}
In Proposition~\ref{Za_tri_dny_uz_domu} below, which is of independent
interest,  we prove that the sequence
$\{A^n_l\}_{l \geq 0}$ satisfies the same recursion. This finishes the proof.
\end{proof}

Recursion~(\ref{Pusinka_Jarka}) appeared, with $p^{< n-1 >}_l$
in place of $S^n_l$, in~\cite[Section~3.4]{gnedbaye:CM96}.
Theorem~\ref{jarkA} gives a realization of free $\pAss^n_d$-algebras 
in the Koszul case ($n \equiv d$ mod $2$) by putting
\[
\Free :=  \bigoplus_{l \geq 0}\Sree l \times \otexp V{l(n-1)+1}.
\] 
We leave as an exercise to describe the structure $n$-ary
multiplication of $\Free$ in this
language, see~\cite{gnedbaye:CM96}.

\begin{proposition}
\label{Za_tri_dny_uz_domu}
The Poincar\'e series of the operad $\pAss^n_d$ is, in the Koszul case 
(with $n$ and $d$ of the same parity), given by
\begin{equation} 
\label{Pusinka_Jarka!}
g_{\sspAss^n_d}(t)=\sum_{l\geq0} (-1)^{ln}A^n_lt^{l(n-1)+1}
\end{equation}
where the coefficients $\{A^n_l\}_{l\geq 0}$ are defined recursively by
$A^n_0:=1$ and
\begin{equation}
\label{JarKa}
A^n_l:=\sum_{\substack{0 \leq l_2 , \cdots ,l_n\leq l-1 
\\ l_2 +\cdots+ l_{n-1}= l-1}} A^n_{l_2} \cdots A^n_{l_n} \ \mbox{for} \ 
l \geq 1.
\end{equation}
\end{proposition}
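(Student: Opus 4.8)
The plan is to determine $g_{\sspAss^n_d}(t)$ from the Koszul functional equation of Theorem~\ref{gk} and then to recognise the answer as the series on the right-hand side of~(\ref{Pusinka_Jarka!}).

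First I would feed the available data into Theorem~\ref{gk}. By hypothesis $n\equiv d\pmod2$, so $\pAss^n_d$ is Koszul by Theorem~\ref{2}, and $(\pAss^n_d)^!=\tAss^n_{n-d-2}$ by Proposition~\ref{JarKA}; since $n-d-2\equiv n-d\equiv0\pmod2$ the degree of the dual operad is even, so the even branch of~(\ref{Zitra_ma_priletet_Jarka_do_Mulhouse}) in Lemma~\ref{l} gives $g_{\sstAss^n_{n-d-2}}(t)=t/(1-t^{n-1})$. Put $\phi(t):=-g_{\sstAss^n_{n-d-2}}(-t)=t/\bigl(1-(-1)^{n-1}t^{n-1}\bigr)=t/\bigl(1+(-1)^nt^{n-1}\bigr)$. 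Then~(\ref{zitra_Jarka}) says $g_{\sspAss^n_d}\circ\phi=\mathrm{id}$; since $\phi(0)=0$ and $\phi'(0)=1$, the formal series $\phi$ has a unique compositional inverse $\phi^{-1}$, and since any one-sided inverse coincides with it, $g_{\sspAss^n_d}=\phi^{-1}$. So it remains to verify that the right-hand side of~(\ref{Pusinka_Jarka!}) equals $\phi^{-1}$.

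Next I would analyse that series directly. Set $P(t):=\sum_{l\geq0}(-1)^{ln}A^n_lt^{l(n-1)+1}$ and introduce the auxiliary series $a(x):=\sum_{l\geq0}A^n_lx^l$; the recursion~(\ref{JarKa}) (with $A^n_0=1$ and $A^n_l$ the sum of the products $A^n_{l_2}\cdots A^n_{l_n}$ over $l_2+\cdots+l_n=l-1$) is exactly the identity $a(x)=1+x\,a(x)^{n-1}$. Since $(-1)^{ln}=\bigl((-1)^n\bigr)^l$ and $t^{l(n-1)}=(t^{n-1})^l$ we have $P(t)=t\,a\bigl((-1)^nt^{n-1}\bigr)$, so $a\bigl((-1)^nt^{n-1}\bigr)=P(t)/t$; substituting $x=(-1)^nt^{n-1}$ into $a(x)=1+x\,a(x)^{n-1}$ gives
\[
\frac{P(t)}{t}=1+(-1)^nt^{n-1}\Bigl(\frac{P(t)}{t}\Bigr)^{n-1}=1+(-1)^nP(t)^{n-1},
\]
that is $t=P(t)/\bigl(1+(-1)^nP(t)^{n-1}\bigr)=\phi(P(t))$. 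Hence $\phi\circ P=\mathrm{id}$, so $P=\phi^{-1}=g_{\sspAss^n_d}$, which is precisely~(\ref{Pusinka_Jarka!}).

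I expect the only place an error could enter is the sign and parity bookkeeping: checking that $n-d-2$ is even so that the \emph{even} case of Lemma~\ref{l} is the relevant one, and keeping the three signs $(-1)^{ln}$, $(-1)^n$ and $(-1)^{n-1}$ consistent through the substitution so that $\phi$ and the series built from $\{A^n_l\}_{l\geq0}$ really are mutually inverse. Everything else — turning the combinatorial recursion into the algebraic identity $a=1+xa^{n-1}$, and invoking uniqueness of the formal compositional inverse to conclude $g_{\sspAss^n_d}=P$ — is routine manipulation of formal power series.
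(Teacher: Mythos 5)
Your proposal is correct and follows essentially the same route as the paper: both translate the recursion~(\ref{JarKa}) into the functional equation $f(t)=t\bigl(1+(-1)^n f(t)^{n-1}\bigr)$, combine it with the even branch of Lemma~\ref{l} for $g_{\sstAss^n_{-d+n-2}}$, and conclude via Theorem~\ref{gk} and uniqueness of the formal compositional inverse. The only (immaterial) difference is that the paper applies~(\ref{zitra_Jarka}) with $\calP=\tAss^n_{-d+n-2}$ and checks that the candidate series satisfies $g_{\sstAss^n_{-d+n-2}}(-f(-t))=t$, whereas you apply it with $\calP=\pAss^n_d$ and identify both sides with $\phi^{-1}$; your sign and parity bookkeeping is accurate throughout.
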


\begin{proof}
One can easily check that the recursive definition~(\ref{JarKa}) of the
coefficients of $f(t) : = g_{\sspAss^n_d}(t)$ is
equivalent to the functional equation
\[
f(t) = t\left(1 + (-1)^n  f(t)^{n-1}\right)
\] 
which in turn immediately implies that $f(t)$ is the unique formal solution of
\[ 
g_{\sstAss^n_{-d+ n-2}}(-f(-t))=t,
\] 
where the Poincar\'e series $g_{\sstAss^n_{-d+ n-2}}(t)$ is as in the first
line of~(\ref{Zitra_ma_priletet_Jarka_do_Mulhouse}) because $-d+ n-2$
is even. Since we are in the Koszul case, the above display means, by
Theorem~\ref{gk}, that $f(t)$ is the Poincar\'e series of 
$(\tAss^n_{-d+ n-2})^! = \pAss^n_d$. This proves the proposition.
\end{proof}

The description of the Poincar\'e  
series of $\pAss^n_d$ for $n$ and $d$ of the same parity given in
Proposition~\ref{Za_tri_dny_uz_domu} implies that
the Poincar\'e series of $\ptildeAss^n_d$ for $d$
odd equals
\[ 
g_{\ssptildeAss^n_d}(t)=\sum_{l\geq0}(-1)^{l}A^n_lt^{l(n-1)+1},
\]
with $\{A^n_l\}_{l \geq 0}$ having the meaning as in~(\ref{Pusinka_Jarka!}).

\begin{example}{\rm
Using {\tt Matematica\/}, we calculated initial values of the
series $\{A^3_l\}_{l \geq 0}$
as $A^3_0 = 1$, $A^3_1 = 1$, $A^3_2 = 2$, $A^3_3 =
5$, $A^3_4 = 14$, $A^3_5 = 42$, $A^3_6 = 132$, $A^3_7 = 429$, $A^3_8 = 1~430$,
$A^3_9 = 4~862$, $A^3_{10} = 16~796$, \&c. 
}
\end{example}

\begin{remark}
{\rm
\label{sec:free-part-assoc}
If $n$
and $d$ are of different parities, the map~(\ref{jarKa}) of
Theorem~\ref{jarkA}, while always being an epimorphism,
need not be a monomorphism. This means that there may be ``unexpected
relations'' in the free algebra $\Free$. Consequently,
the vector space
\[
\bigoplus_{l \geq 1}\Sree l \times \otexp V{l(n-1)+1}
\]
associated to Gnedbaye's trees {\em cannot be\/}  
equipped with a structure of partially associative algebra
generated by $V = \Sree 1 \times V$.
For instance, while  the dimension of $S^3_3$
equals $5$, the dimension of $\pAss ^3_0(7)$ equals $7! \cdot 4$, so
$\pAss ^3_0(V)_3$ has one copy of $\otexp V7$ less than $\Sreep 3
\times V^{\otimes 7}$.
More concretely, it turns out that
\[
v_1(v_2v_3(v_4v_5v_6))v_7+
v_1v_2(v_3(v_4v_5v_6)v_7)+
v_1v_2(v_3v_4(v_5v_6v_7))=0, \ \mbox { for }\ \Rada v17 \in V,
\]
in the free algebra $\pAss ^3_0(V)$ and therefore also in every
degree-$0$ partially associative $3$-algebra. In terms of
Gnedbaye's trees,
\begin{equation}
\label{netece_voda}
\raisebox{-27pt}{{\rule{0pt}{0pt}}}
{
\unitlength=1.000000pt
\begin{picture}(170.00,20.00)(0.00,20)
\thicklines
\put(170.00,20.00){\makebox(0.00,0.00){$=  \hskip .3em 0$.}}
\put(100.00,20.00){\makebox(0.00,0.00){$+$}}
\put(40.00,20.00){\makebox(0.00,0.00){$+$}}
\put(150.00,10.00){\makebox(0.00,0.00){$\bullet$}}
\put(140.00,20.00){\makebox(0.00,0.00){$\bullet$}}
\put(130.00,30.00){\makebox(0.00,0.00){$\bullet$}}
\put(80.00,10.00){\makebox(0.00,0.00){$\bullet$}}
\put(80.00,20.00){\makebox(0.00,0.00){$\bullet$}}
\put(70.00,30.00){\makebox(0.00,0.00){$\bullet$}}
\put(20.00,10.00){\makebox(0.00,0.00){$\bullet$}}
\put(10.00,20.00){\makebox(0.00,0.00){$\bullet$}}
\put(10.00,30.00){\makebox(0.00,0.00){$\bullet$}}
\put(150.00,10.00){\line(0,-1){10.00}}
\put(150.00,10.00){\line(-1,-1){10.00}}
\put(150.00,10.00){\line(1,-1){10.00}}
\put(140.00,20.00){\line(0,-1){10.00}}
\put(140.00,20.00){\line(-1,-1){10.00}}
\put(140.00,20.00){\line(1,-1){10.00}}
\put(130.00,40.00){\line(0,-1){20.00}}
\put(130.00,30.00){\line(1,-1){10.00}}
\put(120.00,20.00){\line(1,1){10.00}}
\put(80.00,10.00){\line(1,-1){10.00}}
\put(80.00,10.00){\line(-1,-1){10.00}}
\put(80.00,20.00){\line(0,-1){20.00}}
\put(80.00,20.00){\line(1,-1){10.00}}
\put(70.00,10.00){\line(1,1){10.00}}
\put(70.00,40.00){\line(0,-1){20.00}}
\put(70.00,30.00){\line(1,-1){10.00}}
\put(60.00,20.00){\line(1,1){10.00}}
\put(20.00,10.00){\line(-1,-1){10.00}}
\put(20.00,10.00){\line(0,-1){10.00}}
\put(10.00,20.00){\line(1,-1){20.00}}
\put(10.00,20.00){\line(0,-1){10.00}}
\put(10.00,20.00){\line(-1,-1){10.00}}
\put(10.00,30.00){\line(1,-1){10.00}}
\put(10.00,30.00){\line(-1,-1){10.00}}
\put(10.00,40.00){\line(0,-1){20.00}}
\end{picture}}  
\end{equation}

This relation can be read off the corresponding galgal
in~(\ref{G3}). To do so, decorate the vertices of $G^3$ by $+$ or $-$
as 
\begin{equation}
\label{Jarca_ma_dovcu.}
\raisebox{-2.3cm}{\rule{0pt}{0pt}}
\unitlength=.9cm
\begin{picture}(6,2.6)(-3.00,0)
\thicklines
\put(0,0){\hustykrouzek}    
\put(0,2){\makebox(0,0){$\bullet$}}
\put(0,2.2){\makebox(0,0)[b]{\scriptsize $\R{1-}$}}
\put(0,2){\line(0,-1){4}}
\put(-1.42,-1.42){\line(1,1){2.84}}
\put(-1.42,1.42){\line(1,-1){2.84}}
\put(-2,0){\line(1,0){4}}
\put(2,0){\makebox(0,0){$\bullet$}}
\put(2.2,0){\makebox(0,0)[l]{\scriptsize $\R{3+}$}}
\put(0,-2){\makebox(0,0){$\bullet$}}
\put(0,-2.2){\makebox(0,0)[t]{\scriptsize $\R{5+}$}}
\put(-2,0){\makebox(0,0){$\bullet$}}
\put(-2.2,0){\makebox(0,0)[r]{\scriptsize $\R{7-}$}}
\put(1.42,1.42){\makebox(0,0){$\bullet$}}
\put(1.52,1.52){\makebox(0,0)[lb]{\scriptsize $\R{2+}$}}
\put(-1.42,1.42){\makebox(0,0){$\bullet$}}
\put(-1.52,1.52){\makebox(0,0)[rb]{\scriptsize $\R{8+}$}}
\put(1.42,-1.42){\makebox(0,0){$\bullet$}}
\put(1.52,-1.52){\makebox(0,0)[lt]{\scriptsize $\R{4-}$}}
\put(-1.42,-1.42){\makebox(0,0){$\bullet$}}
\put(-1.52,-1.52){\makebox(0,0)[tr]{\scriptsize $\R{6+}$}}
\end{picture}
\end{equation}
and take the sum of the corresponding elementary relations, with the
above choice of signs. Notice that endpoints of all edges
in~(\ref{Jarca_ma_dovcu.}) differ by sign, except for the edges 
\[
\R2-\R3,\ \R2- \R6 \ \mbox  { and}\  \ \R5- \R6.
\]
The sum of the elements of the free algebra corresponding to
these edges must be zero. It is is represented by Gnedbaye's trees at the left hand
side of~(\ref{netece_voda}).

We conclude that the map~(\ref{jarKa}) has, for $n=3$, $d=0$ and
$l = 3$, a nontrivial kernel. The Poincar\'e series of $\pAss^3_0$
was calculated in~\cite{goze-remm:n-ary} as
\[
g_{\sspAss^3_0}(t)=t+t^3+2t^5+4t^7+5t^9+6t^{11}+7t^{13}+8t^{15}\cdots.
\]

The same phenomenon takes place also for $n=5$. By choosing appropriate
decorations of the vertices of the $5$th galgal $G^5$ depicted in 
Figure~\ref{G5}, one can verify that the equation
\begin{align*}
0=&\ v_{1}(v_{2}v_{3}v_{4}v_{5}(v_{6}v_{7}v_{8}v_{9}v_{10})) v_{11}v_{12}v_{13}+ 
v_{1}v_{2}(v_{3}v_{4}v_{5}(v_{6}v_{7}v_{8}v_{9}v_{10})v_{11})v_{12}v_{13}
\\
&+ 
v_{1}v_{2}(v_{3}v_{4}v_{5}v_{6}(v_{7}v_{8}v_{9}v_{10}v_{11}))v_{12}v_{13}+ 
v_{1}v_{2}v_{3}(v_{4}v_{5}(v_{6}v_{7}v_{8}v_{9}v_{10})v_{11}v_{12})v_{13}
\\
&+ 
v_{1}v_{2}v_{3}(v_{4}v_{5}v_{6}(v_{7}v_{8}v_{9}v_{10}v_{11})v_{12})v_{13}+ 
v_{1}v_{2}v_{3}(v_{4}v_{5}v_{6}v_{7}(v_{8}v_{9}v_{10}v_{11}v_{12}))v_{13} 
\\
&+
v_{1}v_{2}v_{3}v_{4}(v_{5}(v_{6}v_{7}v_{8}v_{9}v_{10})v_{11}v_{12}v_{13})+ 
v_{1}v_{2}v_{3}v_{4}(v_{5}v_{6}(v_{7}v_{8}v_{9}v_{10}v_{11})v_{12}v_{13})
\\
&+ 
v_{1}v_{2}v_{3}v_{4}(v_{5}v_{6}v_{7}(v_{8}v_{9}v_{10}v_{11}v_{12})v_{13})+
v_{1}v_{2}v_{3}v_{4}(v_{5}v_{6}v_{7}v_{8}(v_{9}v_{10}v_{11}v_{12}v_{13}))
\end{align*}
holds for elements $\Rada v1{13}$ of  
any degree-$0$ partially associative $5$-ary algebra. In
terms of Gnedbaye's trees, the right hand side is represented by the
sum
\begin{center}
\unitlength=.38cm
\begin{picture}(4.00,8.50)(0.00,2.5)
\thicklines
\put(-14,10){\jaruska-1|2|+} \put(-7,10){\jaruska0|1|+}
\put(0,10){\jaruska0|2|+} \put(7,10){\jaruska1|0|+}
\put(14,10){\jaruska1|1|{+}}
\put(-14,5){\jaruska1|2|+} \put(-7,5){\jaruska2|-1|+}
\put(0,5){\jaruska2|0|+} \put(7,5){\jaruska2|1|+} \put(14,5){\jaruska2|2|{.}}
\end{picture}
\end{center}
We believe that the same explicit calculations can be performed for
degree $0$ partially associative $n$-algebras with an arbitrary odd $n$.  
}\end{remark}

\section{Open problems}
\label{open}

The first question which our paper leaves open is the 
Koszulness of the operads $\tAss^n_d$ with $d$ odd and $n \geq 8$. The
method used in the proof of Proposition~\ref{5} does not apply to
these cases and indeed, our numerical tests mentioned in
Remark~\ref{JARuska} suggest that it may happen that all coefficients in the
formal inverse of $t-t^{n}+t^{2n-1}$  are non-negative.

Even if this happens, it would not necessarily mean that the operad $\tAss^n_d$
is Koszul, only that subtler methods must be applied to
that case. For instance, one may try to compare the coefficients of
this formal inverse to the dimensions of the components of the
dual operad $(\tAss^n_d)^!$.

Understanding these components is, of course, equivalent to finding a
basis for the free partially associative algebras in the non-Koszul
cases. This problem was solved, in~\cite{goze-remm:n-ary}, for free
$\pAss^3_0$-algebras; for $n \geq 4$ it remains open.

The last problem we want to formulate here is to find more
about the minimal model of the anti-associative operads $\tildeAss$, 
or even to describe it completely. As far
as we know, beyond the `obvious' cases, 
no complete description of the minimal model of a
non-Koszul operad is known. Since $\tildeAss$ is one of the simplest non-Koszul
operads, it is the first obvious candidate to attack.
A related task is to find as much as information about 
minimal models of the remaining non-Koszul $n$-ary operads as possible.


\def\cprime{$'$}

\end{document}